\xdef\@endgadget#1{{\unskip\nobreak\hfil\penalty50\hskip1em\hbox{}\nobreak
    \hfil#1\parfillskip=0pt\finalhyphendemerits=0\par}}
\def\@qedsymbol{${}_\blacksquare$}
\def\qed{\@endgadget{\@qedsymbol}}
\newtheorem{lemma}{Lemma}[section]
\newtheorem{example}[lemma]{Example}
\newtheorem{definition}[lemma]{Definition}
\newtheorem{proposition}[lemma]{Proposition}
\newtheorem{remark}[lemma]{Remark}
\newcommand{\mR}{\mathbb{R}}
\newcommand{\X}{\mathcal{X}}
\newcommand{\Z}{\mathcal{Z}}
\newcommand{\M}{\mathcal{M}}
\newcommand{\V}{\mathcal{V}}
\newcommand{\W}{\mathcal{W}}
\newcommand{\Y}{\mathcal{Y}}
\newcommand{\U}{\mathcal{U}}
\newcommand{\cL}{\mathcal{L}}
\newcommand{\pperp}{\perp \!\!\!\perp}
\newcommand{\bq}{\begin{equation}}
\newcommand{\eq}{\end{equation}}
\newcommand{\bma}{\begin{bmatrix}}
\newcommand{\ema}{\end{bmatrix}}
\newcommand{\Sin}{\mathrm{Sin\,}}
\def\BibTeX{{\rm B\kern-.05em{\sc i\kern-.025em b}\kern-.08em
    T\kern-.1667em\lower.7ex\hbox{E}\kern-.125emX}}
\title{\LARGE \bf Reciprocity of nonlinear systems}
\author{Arjan van der Schaft
\thanks{A.J. van der Schaft is with the Bernoulli Institute for Mathematics, Computer
Science and AI, Jan C. Willems Center for Systems and Control, University of Groningen, PO Box 407, 9700 AK, the
Netherlands
        {\tt\small A.J.van.der.Schaft@rug.nl}}
}
\begin{document}

\maketitle
\thispagestyle{empty}
\pagestyle{empty}

\section{Introduction}
One of the key contributions of the seminal paper \cite{willems72b} was the analysis of \emph{symmetry} (also called \emph{reciprocity}) of input-state-output systems, both from an external (input-output) and internal (state) point of view. The developed theory also included the combination of reciprocity with passivity, and the consideration of relaxation systems, which are passive reciprocal systems without any oscillatory behavior. The paper was motivated from a fundamental system-theoretic point of view (how is external structure reflected into internal structure), as well as by a wide range of application areas, including electrical network synthesis, thermodynamics, and viscoelastic materials. In fact, the use of the terminology 'reciprocal' goes back to Maxwell (Maxwell's reciprocal rule) and Onsager (the Onsager reciprocal relations). Recently, there is renewed interest in the study of reciprocal systems, and relaxations systems in particular, motivated by neuro-computing \cite{chaffey}, as well by obtaining simple control strategies for complex physical systems \cite{pates19,pates22}.

On the other hand, how powerful and elegant the results obtained in \cite{willems72b} may be, they only hold for \emph{linear} systems, and the extension to the nonlinear case, even for subclasses of nonlinear systems, is far from trivial. The present paper aims at taking some steps into this direction. This will be done by first recalling in Section \ref{sec:linear} some of the main features of the linear theory, and to reformulate them in a more geometric fashion, thus setting the stage for a proper nonlinear generalization.
Then, in Section \ref{sec:reciprocity}, the geometric definition of reciprocity is extended to general nonlinear state space systems; relying on the notion of a Lagrangian submanifold. Furthermore, it is shown how this definition is equivalent to the more classical definition of (pseudo-)gradient systems. This latter definition originates (for the case of systems without inputs and outputs) from the theory of dynamical systems, in particular the work \cite{BM1,BM2} on modeling and analysis of nonlinear RLC electrical networks. This was later extended to input-state-output systems in e.g. \cite{crouch, goncalves, vds83,cortes}. Furthermore, in this section special attention is paid to (pseudo-)gradient systems with a Hessian pseudo-Riemannian metric, and their relation to port-Hamiltonian systems (see already \cite{vds11,jeltsema}). Section \ref{sec:combination} deals with the combination of reciprocity and passivity; trying to generalize the elegant theory of \cite{willems72b}. The case of port-Hamiltonian systems with added reciprocity structure (see already \cite{vds11}) will turn out to be instrumental in reaching such a (partial) extension. Finally, Section \ref{sec:examples} provides a number of examples, while the conclusions and open problems are indicated in Section \ref{sec:conclusions}. 

The paper ends with two appendices about necessary background; Appendix A about some useful properties of Legendre transformation, and Appendix B about connections on manifolds.

\medskip

{\bf Notation:} The vector of partial derivatives of a function $S:\X \to \mR$ will be denoted either by $\frac{\partial S}{\partial x}(x)$ (column vector) or by $\nabla S(x)$ (often as a row vector). The Hessian matrix of $S$ will be denoted by $\nabla^2 S(x)$.

\medskip

{\bf Acknowledgements:} The writing of the paper profited from discussions with many people, including Rodolphe Sepulchre, Henk van Waarde, Tom Chaffey, Kanat Camlibel, Juan Peypouquet and Hans Schumacher.

\section{The linear case; recall and geometric interpretation}\label{sec:linear}
In this section I will briefly summarize the theory of reciprocal linear systems, and reciprocity combined with passivity, as originating from \cite{willems72b}. Furthermore, in order to set the stage for the extension to the nonlinear case, these results will be interpreted from a \emph{geometric} perspective; providing additional insights into the linear case as well.

\subsection{Reciprocity in the linear case}
We will start by recalling the geometric notion of a \emph{Lagrangian subspace}, generalizing the graph of a symmetric linear map. Let $\V$ be a finite-dimensional linear space with $\dim \V=n$. Consider the duality product $<w \mid v> , v \in \V, w\in \V^*$. In any basis for $\V$ and dual basis for $\V^*$ the duality product reduces to the vector product $w^\top v$, where we identify $v$ and $w$ with their coordinate vectors. Define on $\V \times \V^*$ the \emph{symplectic form}
\bq
\label{symplectic}
\langle (v_1,w_1), (v_2,w_2) \rangle := <w_1 \mid v_2> - <w_2  \mid  v_1>, \quad (v_1,w_1), (v_2,w_2) \in \V \times \V^*
\eq
After choosing linear coordinates for $\V$ and dual coordinates for $\V^*$ the symplectic form corresponds to the matrix $\bma 0 & -I_n \\ I_n & 0 \ema$.
\begin{definition}
A subspace $\cL \subset \V \times \V^*$ is Lagrangian if the symplectic form $\langle \cdot, \cdot \rangle$ is zero on $\cL$, and $\cL$ is maximal with respect to this property (i.e., there does not exist a subspace $\cL'$ with $\langle \cdot, \cdot \rangle$ zero on $\cL'$ and $\cL\subsetneqq \cL'$). 
\end{definition}
\begin{proposition}
A subspace $\cL \subset \V \times \V^*$ is Lagrangian if and only if $\cL=\cL^{\pperp}$, where ${}^{\pperp}$ denotes orthogonal companion with respect to $\langle \cdot, \cdot \rangle$. Any Lagrangian subspace satisfies $\dim \cL =\dim \V$.
\end{proposition}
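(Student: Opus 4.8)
The plan is to establish both claims by working with the orthogonal companion (symplectic complement) $\cL^{\pperp} := \{z \in \V \times \V^* : \langle z, z' \rangle = 0 \text{ for all } z' \in \cL\}$. The key structural fact I would exploit is that the symplectic form \eqref{symplectic} is nondegenerate: its coordinate matrix $J = \bigl(\begin{smallmatrix} 0 & -I_n \\ I_n & 0 \end{smallmatrix}\bigr)$ is invertible. Nondegeneracy gives the fundamental dimension count $\dim \cL + \dim \cL^{\pperp} = \dim(\V \times \V^*) = 2n$, which holds for \emph{any} subspace $\cL$; this is the standard rank-nullity identity for the linear map $z \mapsto \langle z, \cdot \rangle$ restricted to $\cL$, and I would first record it as the workhorse of the proof.

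Next I would prove the equivalence. A subspace $\cL$ has the symplectic form vanishing on it precisely when $\cL \subseteq \cL^{\pperp}$ (this is just the definition of $\cL^{\pperp}$ unwound). So the isotropic subspaces are exactly those with $\cL \subseteq \cL^{\pperp}$. For the forward direction, suppose $\cL$ is Lagrangian, i.e. isotropic and maximal. From $\cL \subseteq \cL^{\pperp}$ and the dimension identity, $\dim \cL \le \dim \cL^{\pperp} = 2n - \dim \cL$, so $\dim \cL \le n$. I would then argue that maximality forces equality $\cL = \cL^{\pperp}$: if the inclusion were strict, pick any $z \in \cL^{\pperp} \setminus \cL$ and consider $\cL' := \cL + \spa\{z\}$. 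Because $z \in \cL^{\pperp}$ it is skew-orthogonal to all of $\cL$, and $\langle z, z \rangle = 0$ automatically by skew-symmetry of the form, so $\cL'$ is again isotropic and strictly contains $\cL$, contradicting maximality. Hence $\cL = \cL^{\pperp}$. For the converse, if $\cL = \cL^{\pperp}$ then $\cL$ is isotropic (since $\cL \subseteq \cL^{\pperp}$), and any isotropic $\cL' \supseteq \cL$ satisfies $\cL' \subseteq (\cL')^{\pperp} \subseteq \cL^{\pperp} = \cL$, forcing $\cL' = \cL$; thus $\cL$ is maximal, i.e. Lagrangian.

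Finally, the dimension statement falls out immediately: $\cL = \cL^{\pperp}$ combined with $\dim \cL + \dim \cL^{\pperp} = 2n$ yields $2 \dim \cL = 2n$, hence $\dim \cL = n = \dim \V$.

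I do not expect a serious obstacle here; the argument is the classical symplectic-linear-algebra fact. The one point requiring genuine care — rather than a routine calculation — is the maximality step: one must verify that the \emph{augmented} space $\cL'$ is genuinely isotropic, which relies on two things working together, namely that $z$ is skew-orthogonal to every element of $\cL$ (from $z \in \cL^{\pperp}$) and that the form is \emph{alternating} so that the new diagonal term $\langle z, z \rangle$ vanishes for free. It is worth emphasizing that nondegeneracy of the symplectic form is used only to obtain the dimension identity $\dim \cL + \dim \cL^{\pperp} = 2n$; the skew-symmetry is what drives the maximality equivalence.
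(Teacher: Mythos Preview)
Your argument is correct and is the standard symplectic-linear-algebra proof. The paper itself states this proposition without proof, treating it as well-known background; there is therefore no paper-proof to compare against, and your write-up is exactly the proof one would supply.
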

An example of a Lagrangian subspace $\cL$ is the graph of a symmetric map $\V \to \V^*$, or $\V^* \to \V$.
In general, we have the following representation of Lagrangian subspaces.
\begin{proposition}
Consider a Lagrangian subspace $\cL \subset \V \times \V^*$. Take any set of coordinates for $\V$ and dual coordinates for $\V^*$. Then, possibly after permutation, we can split the coordinate vectors $v \in \V$ and correspondingly $w \in \V^*$, as
$v= \bma v_1 \\ v_2 \ema, \quad w= \bma w_1 \\ w_2 \ema$, with $\dim v_1=\dim w_1=k$, $\dim v_2=\dim w_2= n-k$,
such that $\cL$ is parametrized by $v_1,w_2$, and there exists a symmetric matrix $L$, correspondingly split as $L= \bma L_{11} & L_{12} \\[2mm] L_{21} & L_{22} \ema$, such that
\bq
 \label{eq:gen}
\cL=\{ (v_1,v_2,w_1,w_2) \mid \bma w_1 \\[2mm] v_2 \ema = \bma L_{11} & L_{12} \\[2mm] - L_{21} & - L_{22} \ema
 \bma v_1 \\[2mm] w_2 \ema \}
 \eq
\end{proposition}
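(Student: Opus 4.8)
The plan is to locate the correct splitting of the coordinates from the geometry of $\cL$ itself, then to establish the graph representation by a dimension count, and finally to read off the symmetry of $L$ from the vanishing of the symplectic form on $\cL$. Throughout I will use only that $\cL$ is isotropic (the form is zero on $\cL$) together with $\dim \cL = \dim \V = n$ from the preceding proposition.

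First I would fix the splitting. Consider the projection $\rho : \cL \to \V$, $(v,w)\mapsto v$, and set $\V_1 := \rho(\cL)$ with $k := \dim \V_1$. Since $\V_1$ is a $k$-dimensional subspace of $\V$, after permuting the coordinates of $\V$ (and correspondingly the dual coordinates of $\V^*$) we may assume $\V_1$ is a graph over its first $k$ coordinates; that is, writing $v=(v_1,v_2)$ with $\dim v_1=k$ there is an $(n-k)\times k$ matrix $A$ with $\V_1=\{v : v_2 = A v_1\}$. This fixes the splitting $v=(v_1,v_2)$, $w=(w_1,w_2)$ of the statement, and it remains to show that $(v_1,w_2)$ parametrizes $\cL$ and to identify the matrix.

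The main step — and the one that genuinely uses that $\cL$ is Lagrangian — is to show that the coordinate projection $\cL \to \mR^k \times \mR^{n-k}$, $(v,w)\mapsto (v_1,w_2)$, is an isomorphism. Since both spaces have dimension $n$, it suffices to prove injectivity, so suppose $(v,w)\in\cL$ with $v_1=0$ and $w_2=0$. From $v\in\V_1$ and the graph form one gets $v_2 = A v_1 = 0$, hence $v=0$. Now isotropy enters: for $(0,w)\in\cL$ and any $(v',w')\in\cL$ the symplectic form gives $\langle (0,w),(v',w')\rangle = w^\top v' = 0$, so $w$ annihilates all of $\V_1=\rho(\cL)$. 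A one-line computation with the graph form shows this annihilator equals $\{w : w_1=-A^\top w_2\}$, which together with $w_2=0$ forces $w=0$. Thus the projection is injective, hence an isomorphism, so $\cL$ is the graph of a linear map: there is a matrix $M=\bma M_{11}&M_{12}\\ M_{21}&M_{22}\ema$ with $\bma w_1\\ v_2\ema = M\bma v_1\\ w_2\ema$ on $\cL$.

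Finally I would impose isotropy a second time to pin down $M$. Substituting two generic elements of $\cL$, parametrized by $(v_1,w_2)$ and $(v_1',w_2')$, into the symplectic form and collecting terms produces an expression that must vanish identically; separating the pure quadratic parts in $(v_1,v_1')$ and in $(w_2,w_2')$ from the mixed part gives exactly $M_{11}=M_{11}^\top$, $M_{22}=M_{22}^\top$, and $M_{21}=-M_{12}^\top$. Setting $L_{11}:=M_{11}$, $L_{12}:=M_{12}$, $L_{21}:=-M_{21}=M_{12}^\top$ and $L_{22}:=-M_{22}$ then renders $L$ symmetric and rewrites the relation as $\bma w_1\\ v_2\ema = \bma L_{11}&L_{12}\\ -L_{21}&-L_{22}\ema\bma v_1\\ w_2\ema$, as claimed. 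The conceptual obstacle is entirely in the third paragraph: one must project onto the \emph{conjugate} coordinates $(v_1,w_2)$ rather than onto $v$ alone, and recognize that isotropy is precisely what makes this projection injective; once that is secured, the symmetry of $L$ is a routine bilinear computation.
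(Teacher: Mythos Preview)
Your argument is correct. The paper states this proposition without proof, so there is no ``paper's own proof'' to compare against; your approach supplies exactly the kind of argument one would expect here and is the standard one (cf.\ the analogous statement for Lagrangian submanifolds cited later in the paper from \cite{arnold}).

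A couple of minor remarks on presentation. First, your choice $k=\dim\rho(\cL)$ is one natural choice, but note that the proposition as stated does not single out a canonical $k$; indeed different permutations can yield different admissible values of $k$ (e.g.\ if $\cL$ is already the graph of a symmetric map $\V\to\V^*$ one may take $k=n$, but smaller $k$ may also work after permutation). Your construction produces one valid splitting, which is all that is claimed. Second, in the injectivity step you silently use that \emph{every} $v$ occurring in an element of $\cL$ satisfies $v_2=Av_1$; this is immediate from $\V_1=\rho(\cL)$, but it is worth making explicit since that is where the specific choice of $\V_1$ (rather than an arbitrary $k$-dimensional subspace) is used. Otherwise the argument is clean: the key insight---that isotropy forces the kernel of the projection onto the \emph{mixed} coordinates $(v_1,w_2)$ to vanish---is exactly right, and the extraction of the symmetry of $L$ from the vanishing of the symplectic form is routine and correctly carried out.
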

 Note that symmetry of $L$ is equivalent to
 \bq
 \label{eq:gen1}
 \bma I_k & 0 \\[2mm] 0 & -I_{n-k} \ema \bma L_{11} & L_{12} \\[2mm] -L_{21} & -L_{22} \ema =
 \bma L_{11} & L_{12} \\[2mm] -L_{21} & -L_{22} \ema^\top \bma I_k & 0 \\[2mm] 0 & -I_{n-k} \ema
 \eq
Here any diagonal matrix $\sigma := \bma I_k & 0 \\ 0 & -I_{n-k} \ema$ with $k$ diagonal elements $1$ and $n-k$ diagonal elements $-1$ is called a {\it signature matrix}. 

Consider now a linear system on an $n$-dimensional linear state space $\X$
\bq
\label{linear}
\Sigma: \quad 
\begin{array}{rcl}
\dot{x} & = & Ax + Bu, \quad x \in \X, u\in \U \\[2mm]
y & = & Cx +Du, \quad y \in \Y,
\end{array}
\eq
where the $m$-dimensional linear input and output space $\U$ and $\Y$ are such that $\U = \Y^*$. Associate with the system $\Sigma$ its \emph{system map} 
\bq
\Sigma_m := \bma A & B \\[2mm] C & D \ema : \X \times \U \to \X \times \Y
\eq
Consider an invertible symmetric $n \times n$ matrix $G$, defining a (possibly indefinite) inner product on $\X$, as well as a linear map (denoted by the same letter) $G: \X \to \X^*$. Furthermore, consider an $m \times m$ signature matrix
$\sigma$, which can be regarded as the matrix representation of a linear map $\sigma: \Y \to \Y$ satisfying $\sigma^2=I$.
\begin{definition}
The linear system $\Sigma$ is called \emph{reciprocal} (with respect to $G$ and $\sigma$) if the system map $\Sigma_m: \X \times \U \to \X \times \Y$, composed with the map $\bma G & 0 \\ 0 & \sigma \ema : \X \times \Y \to \X^* \times \Y$, is \emph{symmetric}, that is
\bq
\label{reclinear}
\bma GA & GB \\[2mm] \sigma C & \sigma D \ema = \bma GA & GB \\[2mm] \sigma C & \sigma D \ema ^\top
\eq
Equivalently, the \emph{graph} of the map $\bma GA & GB \\ \sigma C & \sigma D \ema : \X \times \U \to \X^* \times \Y$ is a Lagrangian subspace of $\X \times \U \times \X^* \times \Y$.
\end{definition}
In view of \eqref{reclinear} any reciprocal system can be rewritten into the form
\bq
\label{pseudolinear}
\begin{array}{rcl}
G\dot{x} &= &- Px + C^\top \sigma u \\[2mm]
 y & = & Cx + D u, \quad \qquad \sigma D= D^\top \sigma,
\end{array}
\eq
where $P:=-GA$ satisfies $P=P^\top$. This is called a \emph{pseudo-gradient system} (and a \emph{gradient system} if $G>0$). The quadratic function $\frac{1}{2}x^\top P x$ is called the internal \emph{potential function} of the pseudo-gradient system.

As noted in \cite{willems72b} the matrix $G$ can be obtained from input-output data. In fact, along any solution of $\Sigma$ one computes
\bq
\begin{array}{l}
\frac{d}{dt} x^\top (t)Gx(-t) = \\[2mm]
\left[x^\top(t)A^\top + u^\top(t)B^\top \right]Gx(-t) - x^\top(t)G \left[Ax(-t) + Bu(-t) \right],
\end{array}
\eq
and thus by \eqref{reclinear}
\bq
\frac{d}{dt} x^\top (t)Gx(-t) = u^\top (t) \sigma y(-t) - (\sigma y)^\top(t) u(-t)
\eq
Hence, by considering solutions with $u(t)=0$ for $t\geq 0$, and integrating over $[0,\infty)$, one obtains
$x^\top (\infty)Gx(-\infty) - x^\top(0)Gx(0) = - \int_0^\infty (\sigma y)^\top (t) u(-t) dt$. Thus if either $x(-\infty)$ or $x(\infty)$ equals zero, then $G$ is determined by
\bq
\label{Ghankel}
x^\top(0)Gx(0) = \int_0^\infty (\sigma y)^\top (t)  u(-t) dt
\eq
for any $x(0) \in \X$.
\begin{remark}
{\rm As recently noted and explored in \cite{chaffey}, see also \cite{willems72b}, the right-hand side of \eqref{Ghankel} is equal to the quadratic form defined by the symmetric \emph{Hankel operator} of the reciprocal system $\Sigma$. (Recall that the Hankel operator maps input functions $u(-t), t \in [0,\infty)$, to output functions $y(t), t \in [0,\infty)$.)}
\end{remark}
Alternatively, see \cite{willems72b}, in case of a controllable and observable system $\Sigma$, the matrix $G$ is uniquely determined as the \emph{state space isomorphism} $G: \X \to \X^*$ between $\Sigma$ and its \emph{dual system} (with respect to $\sigma$), defined as
\bq
\Sigma^d: \quad 
\begin{array}{rcl}
\dot{p} & = & A^Tp + C^\top \sigma u^d, \quad  p \in \X^*, u^d \in \U^* \\[2mm]
y^d & = &  B^\top  p +  D^\top \sigma u^d, \quad y^d \in \Y^*
\end{array}
\eq
with dual inputs $u^d$ and outputs $y^d$. Indeed, reciprocity means that the input-output behavior of the dual system $\Sigma^d$ is equal to that of $\Sigma$. This is the same as that the impulse response matrix $W(t)= Ce^{At}B \, + \, D\delta(t)$ of $\Sigma$ is \emph{symmetric} with respect to $\sigma$, i.e., $\sigma W(t)= W^\top (t) \sigma$. Equivalently, the transfer matrix $H(s)=C(Is - A)^{-1}B + D$ satisfies $\sigma H(s)= H^\top (s) \sigma$. Notice that this implies that any single-input single-output linear system is reciprocal.

\subsection{Passivity of linear systems}
A linear system $\Sigma$ is \emph{passive} if there exists an $n \times n$ symmetric matrix $Q\geq 0$ satisfying the dissipation inequality $\frac{d}{dt} \frac{1}{2} x^\top Q x \leq y^\top u$ for any $x,u$, or equivalently the LMI
\bq
\label{paslinear}
\bma Q & 0 \\[2mm] 0 & I \ema \bma -A & -B \\[2mm] C & D \ema +
\bma -A^\top & C^\top \\[2mm] -B^\top & D^\top \ema  \bma Q & 0 \\[2mm] 0 & I \ema \geq 0
\eq
The quadratic function $\frac{1}{2} x^\top Q x \geq 0$ is called a \emph{storage function} for the system $\Sigma$. If the requirement $Q\geq 0$ is dropped then the system is \emph{cyclo-passive}. The following result is known from the literature; the short proof given below is an adaptation of the one given in \cite{cam14}.
\begin{proposition}
Suppose $Q=Q^\top$ is a solution to \eqref{paslinear}. Then $\ker Q$ is $A$-invariant and contained in $\ker C$. In particular, if the system is observable then necessarily $\ker Q=0$.
\end{proposition}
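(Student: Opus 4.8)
The plan is to first write the matrix inequality \eqref{paslinear} as a single symmetric block matrix, and then exploit the elementary fact that a positive semidefinite matrix annihilates every vector on which its associated quadratic form vanishes. Carrying out the two block multiplications in \eqref{paslinear} and adding, the inequality becomes
\bq
M := \bma -QA - A^\top Q & C^\top - QB \\[2mm] C - B^\top Q & D + D^\top \ema \geq 0.
\eq
This reformulation isolates the term $-QA - A^\top Q$ in the upper-left block, which is exactly what I will test against vectors in $\ker Q$, while the lower-left block $C - B^\top Q$ will deliver the inclusion into $\ker C$.

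Next I would take an arbitrary $x \in \ker Q$, so that $Qx=0$, and evaluate the quadratic form of $M$ on the padded vector $z := \bma x \\ 0 \ema \in \X \times \U$. Only the upper-left block contributes, and $z^\top M z = x^\top(-QA - A^\top Q)x = -x^\top Q A x - (Qx)^\top A x = 0$ since $Qx=0$ (using $Q=Q^\top$). The key step is then to invoke that for $M=M^\top \geq 0$ one has $z^\top M z = 0 \Rightarrow Mz = 0$: writing $M = R^\top R$ gives $\|Rz\|=0$, whence $Mz = R^\top R z = 0$. Reading off the two block components of $Mz=0$ yields $-QAx - A^\top(Qx)=0$ and $(C - B^\top Q)x = 0$. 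Substituting $Qx=0$ collapses these to $QAx = 0$ and $Cx = 0$, that is $Ax \in \ker Q$ and $x \in \ker C$. As $x \in \ker Q$ was arbitrary, this simultaneously establishes the $A$-invariance of $\ker Q$ and the inclusion $\ker Q \subseteq \ker C$.

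Finally, for the observability conclusion I would recall the standard characterization that the unobservable subspace of the pair $(C,A)$ is the largest $A$-invariant subspace contained in $\ker C$, and that observability is precisely the vanishing of this subspace. Since $\ker Q$ has just been shown to be $A$-invariant and contained in $\ker C$, it is contained in the unobservable subspace and hence must equal $\{0\}$. I do not anticipate a genuine obstacle here: the whole argument rests on the single positive-semidefinite kernel fact, and the only point requiring mild care is to multiply out \eqref{paslinear} correctly and to track which cross terms drop out because $Qx=0$.
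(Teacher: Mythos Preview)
Your proof is correct and follows essentially the same route as the paper's: both denote the LMI matrix by a single symmetric positive-semidefinite matrix, observe that the quadratic form vanishes on $\bma x \\ 0 \ema$ for $x \in \ker Q$, invoke the implication $z^\top M z = 0 \Rightarrow Mz = 0$ for $M \geq 0$, and read off $QAx=0$ and $Cx=0$ from the block rows. You simply spell out a few more details (the explicit block form of $M$, the factorization argument, and the unobservable-subspace characterization) that the paper leaves implicit.
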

\begin{proof}
Denote the symmetric matrix on the left-hand side of the LMI \eqref{paslinear} by $\Pi$.
Let $z \in \ker Q$. Then $\bma z^\top & 0  \ema \Pi \bma z \\ 0  \ema=0$, and thus, since $\Pi=\Pi^\top \geq 0$, also $\Pi \bma z \\ 0  \ema=0$. This means $Az \in \ker Q$ and $Cz=0$.
\end{proof}
The LMI \eqref{paslinear} admits the following interpretation, using the geometric notion of a (maximally) \emph{monotone subspace}, which is defined as follows.
Consider again an $n$-dimensional linear space $\V$. Next to the symplectic form $\langle \cdot, \cdot \rangle$, there is another canonical bilinear form on $\V \times \V^*$ (where the minus sign in \eqref{symplectic} is replaced by a plus sign), namely
\bq
\langle \langle(v_1,w_1), (v_2,w_2) \rangle \rangle := <w_1 \mid v_2> + <w_2  \mid  v_1>, \; (v_1,w_1), (v_2,w_2) \in \V \times \V^*
\eq
In linear coordinates for $\V$ and dual coordinates for $\V^*$ this corresponds to the matrix $\bma 0 & I_n \\ I_n & 0 \ema$, having $n$ singular values $+1$ and $n$ singular values $-1$.
\begin{definition}
A subspace $\M \subset \V \times \V^*$ is \emph{monotone} if the quadratic form defined by $\langle \langle  \cdot, \cdot \rangle \rangle$ is nonnegative on $\M$, and \emph{maximally monotone} if $\M$ is maximal with respect to this property (that is, there does not exist a subspace $\M'$ with $\langle \langle \cdot, \cdot \rangle \rangle$ nonnegative on $\M'$ and $\M\subsetneqq \M'$).
\end{definition}
We have the following results (see \cite{vds23} for a proof).
\begin{proposition} 
\label{monotoneprop}Any monotone subspace $\M \subset \V \times \V^*$ satisfies $\dim \M \leq \dim \V$. A monotone subspace is maximally monotone iff $\dim \M = \dim \V$. Any monotone subspace $\M \subset \V \times \V^*$ can be represented as $\M=\{ (v,w) \mid v= M_1 \lambda, w=M_2 \lambda, \lambda \in \Z$ for some maps $M_1: \Z \to \V, M_2: \Z \to \V^*$ such that the map $M:=M_2^*M_1: \Z \to \Z^*$ is monotone, i.e., $<z, Mz> \geq 0$ for all $z \in \Z$. Conversely, for any such $M_1,M_2$ the subspace $\M$ is monotone.
\end{proposition}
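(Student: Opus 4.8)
The plan is to begin by putting the symmetric form $\langle\langle\cdot,\cdot\rangle\rangle$ into a diagonal normal form, since its matrix $\bma 0 & I_n \\ I_n & 0 \ema$ has signature $(n,n)$ and all three assertions are really statements about the positive semidefinite subspaces of such a form. After choosing bases identifying $\V$ and $\V^*$ with $\mR^n$, I would introduce the linear change of coordinates $a := v+w$, $b := v-w$ on $\V \times \V^*$. A one-line computation gives $\langle\langle (v,w),(v,w)\rangle\rangle = 2w^\top v = \tfrac12(|a|^2 - |b|^2)$, so that a subspace $\M$ is monotone precisely when $|b| \le |a|$ for every element $(a,b)\in\M$ written in the new coordinates. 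The first assertion is then immediate: the projection $\pi_a : \M \to \mR^n$, $(a,b)\mapsto a$, is injective, because $(0,b)\in\M$ forces $|b|\le 0$ and hence $b=0$; thus $\dim\M \le n$.

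For the second assertion, the direction $\dim \M = n \Rightarrow$ maximal is a direct consequence of the first: any monotone $\M' \supseteq \M$ again satisfies $\dim \M' \le n = \dim \M$, forcing $\M'=\M$. The converse, that a maximally monotone subspace must have dimension $n$, is the heart of the matter, and I expect the enlargement step to be the main obstacle. Here the normal form pays off. Since $\pi_a$ is injective, any monotone $\M$ is the graph $\{(a,f(a)) : a \in A_0\}$ of a linear map $f : A_0 \to \mR^n$ over the subspace $A_0 := \pi_a(\M)$ of dimension $k = \dim \M$, and monotonicity says exactly that $f$ is a contraction, $|f(a)|\le |a|$. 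If $k < n$ I would choose a unit vector $a_1$ orthogonal to $A_0$, extend $f$ to $A_0 \oplus \mR a_1$ by setting $f(a_1):=0$, and observe that orthogonality of $a_1$ to $A_0$ keeps the extension a contraction; its graph is then a monotone subspace properly containing $\M$, contradicting maximality. Hence a maximally monotone subspace has dimension exactly $n$.

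The third assertion is essentially a bookkeeping statement. I would take $\Z := \M$ and let $M_1, M_2$ be the restrictions to $\M$ of the canonical projections of $\V \times \V^*$ onto $\V$ and $\V^*$; by construction $\M = \{(M_1\lambda, M_2\lambda) : \lambda \in \Z\}$. Unwinding the definition of the adjoint $M_2^* : \V^{**} \cong \V \to \Z^*$ yields the identity $\langle \lambda, M_2^* M_1 \lambda\rangle = \langle M_2 \lambda \mid M_1\lambda\rangle$, whose right-hand side is precisely the quantity $\langle w \mid v\rangle$ controlling monotonicity of $\M$. Thus $M := M_2^* M_1$ is monotone if and only if $\M$ is, and reading the same identity backwards gives the converse statement for arbitrary $M_1, M_2$ (where the image $\{(M_1\lambda, M_2\lambda)\}$ is automatically a subspace). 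The only point requiring care is the canonical identification $\V^{**} \cong \V$ used in forming $M_2^*$; beyond that the argument is routine.
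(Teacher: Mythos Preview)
Your argument is correct. Note, however, that the paper itself does not supply a proof of this proposition: it simply states the result and refers the reader to \cite{vds23}. So there is no in-paper argument to compare against, and your self-contained proof in fact fills a gap left open in the exposition.

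The route you take is the standard one via diagonalization of the indefinite form: the change of variables $a=v+w$, $b=v-w$ turns monotonicity into the contraction condition $|b|\le|a|$, after which the dimension bound follows from injectivity of $\pi_a$, and the enlargement step (extend the contraction $f:A_0\to\mR^n$ by zero on a vector orthogonal to $A_0$) is exactly the right idea. Your verification that the extension remains a contraction is correct, since for $a\in A_0$ and $t\in\mR$ one has $|f(a+ta_1)|=|f(a)|\le|a|\le|a+ta_1|$ by orthogonality. For the representation claim, taking $\Z=\M$ with $M_1,M_2$ the coordinate projections is the natural choice, and the identity $\langle M_2^*M_1\lambda,\lambda\rangle=\langle M_2\lambda\mid M_1\lambda\rangle$ does the rest. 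One very minor remark on the converse: for arbitrary $M_1,M_2$ the parametrization by $\lambda\in\Z$ need not be injective, but this is harmless since the pointwise identity just mentioned already shows every element of the image satisfies $\langle w\mid v\rangle\ge 0$.
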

Now associate to the LMI \eqref{paslinear} the subspace $\M \subset \X \times \Y \times \X^* \times \U$ given as\footnote{The minus sign in front of $\dot{x}$ can be explained by considering the port-Hamiltonian formulation of passive systems; see e.g. \cite{jeltsema}.}
\bq
\label{monotone}
\M=\{(-\dot{x},y,z,u) \mid \dot{x} = Ax + Bu, y=Cx + Du, z=Qx \},
\eq
Thus $\M$ is equal to the image of the map
\bq
\label{monmap}
\bma M_1 \\[2mm] M_2 \ema := \bma - A & - B \\ C & D \\[-2mm] --&--\\[-1mm]Q & 0 \\ 0 & I \ema
\eq
Hence by Proposition \ref{monotoneprop}, \eqref{paslinear} is equivalent to $\M$ being a \emph{monotone subspace}. As noted above, if the system is \emph{observable} then $\ker Q=0$. The structure of the map \eqref{monmap} then furthermore implies
\begin{proposition}
If the passive system \eqref{linear} is observable then $\dim \M = \dim \X + \dim \Y$ and $\M$ is \emph{maximally} monotone.
\end{proposition}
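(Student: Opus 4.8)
The plan is to compute $\dim \M$ directly from the parametrization \eqref{monmap} and then appeal to the dimension criterion in Proposition \ref{monotoneprop}. By construction $\M$ is the image of the linear map $\bma M_1 \\ M_2 \ema : \X \times \U \to \X \times \Y \times \X^* \times \U$, whose domain has dimension $\dim \X + \dim \U = n+m$. Since $\U = \Y^*$ we have $\dim \U = \dim \Y = m$, so $n+m = \dim \X + \dim \Y$, which also equals $\dim \V$ for $\V = \X \times \Y$. Thus $\dim \M \leq n+m$ automatically, with equality exactly when the map $\bma M_1 \\ M_2 \ema$ is injective; the whole proof therefore reduces to establishing this injectivity.

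To check injectivity I would look only at the lower block $M_2 = \bma Q & 0 \\ 0 & I \ema : \X \times \U \to \X^* \times \U$. If $(x,u)$ lies in its kernel, then the identity block forces $u=0$ and the $Q$-block forces $Qx=0$, i.e. $x \in \ker Q$. Invoking the already established fact that observability implies $\ker Q = 0$, I conclude $x = 0$. Hence $M_2$ alone is injective, and \emph{a fortiori} so is the full map $\bma M_1 \\ M_2 \ema$; the upper block $M_1$ is not even needed. (Alternatively, one could keep $u=0$ and read $Ax=0$, $Cx=0$ off the upper block and conclude $x=0$ directly from observability, but routing through $\ker Q = 0$ is the shortest path.)

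Having injectivity, I obtain $\dim \M = n+m = \dim \X + \dim \Y = \dim \V$. Since $\M$ was already shown to be monotone (the LMI \eqref{paslinear} being equivalent to monotonicity of $\M$ via Proposition \ref{monotoneprop}), the equality $\dim \M = \dim \V$ triggers the second part of Proposition \ref{monotoneprop}, yielding that $\M$ is maximally monotone.

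I do not anticipate a genuine obstacle here; the argument is a short dimension count. The only point demanding care is the bookkeeping of the identifications $\V = \X \times \Y$, $\V^* = \X^* \times \U$, so that the size $n+m$ of the parametrizing space $\X \times \U$ is correctly matched against $\dim \V$. Everything else follows immediately once $\ker Q = 0$ is available.
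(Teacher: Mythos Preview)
Your proposal is correct and is precisely the argument the paper has in mind: the paper does not spell out a proof but simply notes that observability gives $\ker Q = 0$ and that ``the structure of the map \eqref{monmap} then furthermore implies'' the proposition, which is exactly your injectivity-of-$M_2$ computation followed by the dimension criterion from Proposition~\ref{monotoneprop}.
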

\begin{remark}
{\rm Furthermore, if $\Sigma$ is not observable, but $Q$ is such that $\ker Q \subset \ker A \cap \ker C$, then $\M$ can be \emph{extended} to a maximally monotone subspace.}
\end{remark}
Since $M_2^\top M_1= \bma Q & 0 \\ 0 & I \ema \bma - A & - B \\ C & D \ema$ is a monotone map, the same holds for  $\bma - A & - B \\ C & D \ema \bma Q^{-1} & 0 \\ 0 & I \ema$. By factorizing this latter monotone matrix into its skew-symmetric and positive-definite symmetric part, one immediately obtains a \emph{port-Hamiltonian formulation}; see Section \ref{subsec:ph}.

%

\subsection{Combining reciprocity and passivity for linear systems}
\label{subsec:recpas}
One of the fundamental contributions of \cite{willems72b} is to show how reciprocity can be \emph{combined} with passivity in order to derive a state space realization exhibiting a particularly insightful and useful form. 

In general a passive system admits many $Q \geq 0$ satisfying the LMI \eqref{paslinear}. The key observation is that if $Q>0$ is a solution to \eqref{paslinear}, and additionally the system is reciprocal with respect to $G$ and $\sigma$, then by combining \eqref{paslinear} and \eqref{reclinear} also the positive definite matrix $Q' := GQ^{-1}G$ is a solution of \eqref{paslinear}. By an application of Brouwer's fixed point theorem \cite{willems76} (or more mundane methods, see \cite{willems72b}) it then follows that there exists a $Q>0$ satisfying \eqref{paslinear} \emph{and}
\bq
\label{complinear}
Q = GQ^{-1}G
\eq
A solution $Q$ to \eqref{paslinear} satisfying \eqref{complinear} is said to be \emph{compatible} with $G$. 
Compatible $Q$ define storage functions with a clear physical relevance. 
For example, in electrical network synthesis the expression $\frac{1}{2}x^\top Q x$ for a compatible $Q$ corresponds to the energy stored at the capacitors and inductors of an RLCT synthesis \emph{without} gyrators of a symmetric positive real transfer matrix \cite{willems72b}.

In general, there may still exist many $Q$ satisfying both \eqref{paslinear} and the compatibility condition \eqref{complinear}. On the other hand, as shown in \cite{vds11,willems72b}, if $G>0$ then the only compatible $Q$ is $Q=G$. A passive and reciprocal system with $G>0$ was called\footnote{In fact, in \cite{willems72b} a different, but equivalent, definition in terms of the impulse response matrix was given; see also \cite{chaffey}.} a \emph{relaxation system} in \cite{willems72b}. It follows that the unique compatible storage function of a relaxation system is given as $\frac{1}{2} x^\top G x$. In particular, this unique compatible storage function is determined by the input-output behavior of the system; cf. \eqref{Ghankel}.

\subsection{Port-Hamiltonian formulation of linear passive pseudo-gradient systems}
\label{subsec:ph}
In this subsection we recall, largely based on \cite{vds11}, the (physically) insightful \emph{port-Hamiltonian} formulation of passive pseudo-gradient systems.

Consider throughout an observable pseudo-gradient system \eqref{pseudolinear}, for simplicity of exposition with $\sigma=I$.
Now suppose the system is also {\it passive}. Then, as discussed above, there exists a compatible $Q>0$. As proved in \cite{vds11} this means there exist coordinates  $x=(x_1,x_2)$, with $\dim x_1=k, \dim x_2=n-k$, for $\X$ such that 
\begin{equation}\label{compatible1}
Q = \begin{bmatrix} Q_1 & 0 \\ 0 & Q_2 \end{bmatrix} \, , \quad G = \begin{bmatrix} Q_1 & 0 \\ 0 & - Q_2 \end{bmatrix}
\end{equation}
in which the pseudo-gradient system takes the correspondingly split form
\begin{equation}\label{gradient2}
\begin{array}{rcl}
\begin{bmatrix} Q_1 & 0 \\ 0 & -Q_2 \end{bmatrix} \dot{x} & = & - \begin{bmatrix} P_1 & P_c \\ P_c^\top & P_2 \end{bmatrix}x +  \begin{bmatrix} C^\top_1 \\ 0 \end{bmatrix} u, \quad P_1 \geq 0, \, P_2 \leq 0 \\[4mm]
y & = & \begin{bmatrix} C_1 & 0 \end{bmatrix} x + Du, \qquad D=D^\top
\end{array}
\end{equation}
In fact, \eqref{gradient2} follows by considering $Q,G$ as in \eqref{compatible1}. From $C= B^\top G = B^\top Q$ one obtains $C= \begin{bmatrix} C_1 & 0 \end{bmatrix}$, with $C_1$ an $m \times k$ matrix. 
Substitution of $A= - G^{-1}P$ into $A^\top Q + QA  \leq 0$ then leads to 
$PG^{-1}Q + QG^{-1}P \geq 0$, which in view of (\ref{compatible1}) yields  $P_1 \geq 0, P_2 \leq 0$.

Multiplying now the second part of the differential equations (corresponding to $\dot{x}_2$) on both sides by a minus sign, one obtains the equivalent representation
\begin{equation}\label{gradient3}
\begin{array}{rcl}
\begin{bmatrix} Q_1 & 0 \\ 0 & Q_2 \end{bmatrix} \dot{x} & = & - 
\begin{bmatrix} P_1 & P_c \\ -P_c^\top & -P_2 \end{bmatrix}x +  \begin{bmatrix} C^\top_1 \\ 0 \end{bmatrix} u \\[4mm]
y & = & \begin{bmatrix} C_1 & 0 \end{bmatrix} x + Du
\end{array} 
\end{equation}
Finally, defining the \emph{new} state vector $z = (z_1, z_2)$ by $z_1=Q_1x_1, z_2=Q_2x_2$, the system takes the port-Hamiltonian form
\begin{equation}\label{portham1}
\begin{array}{rcl}
\dot{z} & = & \left( \begin{bmatrix} 0 & - P_c \\ P_c^\top & 0 \end{bmatrix} - \begin{bmatrix} P_1 & 0 \\ 0 & -P_2 \end{bmatrix} \right) \begin{bmatrix} Q_1^{-1} & 0 \\ 0 & Q_2^{-1}\end{bmatrix}z + 
\begin{bmatrix} C^\top_1 \\ 0 \end{bmatrix} u \\[6mm]
y & = & \begin{bmatrix} C_1 & 0 \end{bmatrix} \begin{bmatrix} Q_1^{-1} & 0 \\ 0 & Q_2^{-1}\end{bmatrix}z + Du,
\end{array} 
\end{equation}
with Hamiltonian (stored energy) $\frac{1}{2} z_1^\top Q_1^{-1}z_1 + \frac{1}{2}z_2^\top Q_2^{-1}z_2$.
We conclude that the matrix $P_c$ underlies the skew-symmetric interconnection matrix of the port-Hamiltonian system, corresponding to a \emph{lossless coupling} between the two types of energy storage $\frac{1}{2} z_1^\top Q_1^{-1}z_1$ and $\frac{1}{2}z_2^\top Q_1^{-1}z_2$. On the other hand, the symmetric matrices $P_1\geq 0$ and $-P_2 \geq 0$ correspond to \emph{energy dissipation}. 
In the Brayton-Moser formulation of RLC networks (see Example \ref{ex:BM} for the original nonlinear version) the internal potential function was therefore called the \emph{mixed potential}. In the case of a \emph{relaxation system} $Q=G$, implying that there is only \emph{one} type of energy-storage \cite{willems72b}, and that the matrices $P_c$ and $P_2$ are absent.

\section{Reciprocity of nonlinear systems}\label{sec:reciprocity}
The definition of reciprocity generalizes to the nonlinear case as follows. Instead of the pure symmetry condition of the linear case (the influence of the $i$-the input component on the $j$-th output component is the same as the influence of the $j$-th input on the $j$-th output), we consider some sort of symmetry of the influence of the \emph{variational} inputs on the \emph{variational} outputs. In a mechanical system context this is also referred to as \emph{Maxwell's reciprocal rule} (or theorem).

First, recall the definition of a \emph{Lagrangian submanifold}, generalizing the notion of a Lagrangian subspace as discussed in the previous section. Consider a (smooth) manifold $\M$ and its cotangent bundle $T^*\M$.  Any cotangent bundle $T^*\M$ is endowed with a natural symplectic form $\omega$. Taking local coordinates $x_1, \cdots, x_n$ for $\M$, and the corresponding natural coordinates $(x,p)=x_1, \cdots, x_n, p_1, \cdots,p_n$ for $T^*\M$, this symplectic form is given as $\omega = \sum_{i=1}^n dp_i \wedge dx_i =: dp \wedge dx$. 
\begin{definition}
A Lagrangian submanifold $\cL \subset T^*\M$ is a submanifold restricted to which $\omega$ is zero and which is maximal with respect to this property, i.e., there does not exist $\cL'$ restricted to which $\omega$ is zero and $\cL\subsetneqq \cL'$. 
\end{definition}
It follows that the dimension of any Lagrangian submanifold $\cL \subset T^*\M$ equals $n$ ($= \dim \M$). If $\cL \subset T^*\M$ can be locally parametrized by the coordinates $x_1, \cdots, x_n$, then there exists locally a function $V:\M \to \mR$ such that
\bq
\label{lagrangian}
\cL= \{ (x,p) \mid p_i= \frac{\partial V}{\partial x_i}(x), i=1, \cdots,n \},
\eq
and, conversely, for any $V$ the submanifold $\cL$ defined by \eqref{lagrangian} is a Lagrangian submanifold. (This extends the concept of symmetric maps in the linear case.)

More generally, given natural local coordinates $(x,p)$ for $T^*\M$, for any Lagrangian submanifold $\cL \subset T^*\M$ one can always split the index set $\{1, \cdots,n\}$ into two complementary subsets $I, J$, and find a function $V$ of the variables $(x_I,p_J)$, where $x_I$ denotes the vector of coordinates $x_i,i \in I$, and $p_J$ the vector of coordinates $p_j,j \in J$, such that locally \cite{arnold}
\bq
\label{lagrangian1}
\cL= \{ (x,p) \mid p_i= \frac{\partial V}{\partial x_i}(x_I,p_J), i \in I,  \; x_j= - \frac{\partial V}{\partial p_j}(x_I,p_J), j \in J\}
\eq
Conversely, any set \eqref{lagrangian1} for some $V$ is a Lagrangian submanifold. Extending the discussion of Lagrangian subspaces, the splitting $\{1, \cdots,n\}= I \cup J$ generalizes the use of signature matrices. (Note the minus sign in \eqref{lagrangian1}.)

Now consider a nonlinear system on an $n$-dimensional state space manifold $\X$
\bq
\label{system}
\Sigma: \quad 
\begin{array}{rcl}
\dot{x} & = & F(x,u), \quad x \in \X, u\in \U \\[2mm]
y & = & H(x,u), \quad y \in \Y
\end{array}
\eq
where, as before, the linear $m$-dimensional input and output spaces $\U$ and $\Y$ are such that $\U = \Y^*$.
Associate with the system $\Sigma$ its \emph{system map} $\Sigma_m$ given by
\bq
\Sigma_m : (x,u) \in \X \times \U \mapsto \left( F(x,u), H(x,u) \right) \in T_x\X \times \Y,
\eq
where $T_x\X$ denotes the tangent space to $\X$ at $x \in \X$.
\begin{remark}{\rm
Under smoothness assumptions it follows that the \emph{graph} of the system map $\Sigma_m$ is a submanifold of $T\X \times \U \times \Y$. Combining the input and output spaces $\U$ and $\Y$ into a space of external variables $\W$, and allowing the possibility of \emph{algebraic constraints} this suggests to define a general smooth nonlinear system with state space $\X$ as a general submanifold of $T\X \times \W$.}
\end{remark}
Consider an $m \times m$ signature matrix $\sigma : \Y \to \Y$. Furthermore, consider an invertible symmetric $n \times n$ matrix $G(x)$, smoothly depending on $x \in \X$. This defines a, possibly indefinite, Riemannian metric on $\X$, called a \emph{pseudo-Riemannian metric}. Extending the linear case, the pseudo-Riemannian metric $G$ on $\X$ defines a vector bundle map $G: T\X \to T^*\X$, for simplicity denoted by the same letter $G$, given by $v \in T_x\X \mapsto G(x)v \in T^*_x\X, x \in \X$.
\begin{definition}
The nonlinear system $\Sigma$ is called \emph{reciprocal} (with respect to the pseudo-Riemannian metric $G$ on $\X$ and signature matrix $\sigma$) if the graph of the system map $\Sigma_m$ composed with the map $(v,y) \in T_x\X \times \U \mapsto (G(x)v,\sigma y) \in T_x\X \times \Y $, that is the graph of the nonlinear map
\bq
(x,u) \in \X \times \U  \mapsto \left( G(x)F(x,u), \sigma H(x,u) \right) \in T_x^*\X \times \Y
\eq
is a Lagrangian submanifold of $T^*\X \times \U \times \Y$.
\end{definition}
\begin{remark}
{\rm 
Equivalently \cite{vds83}, consider the symplectic form $d G(x) \, \wedge \, dx + d \, \sigma \, y \wedge du$ on $T \X \times \U \times \Y$ (the pull-back of the standard symplectic form on $T^* \X \times \U \times \Y$ by the map $(v,y) \in T_x \X \times \U \mapsto (G(x)v,\sigma y)$ ). Then reciprocity is the same as the graph of the system map $\Sigma_m$ being a Lagrangian submanifold of $T \X \times \U \times \Y$. 
}
\end{remark}
\begin{proposition}
The nonlinear system $\Sigma$ given by \eqref{system} is reciprocal (with respect to $G$ and $\sigma$) if and only if $F(x,u)$ and $H(x,u)$ satisfy for all $x,u$
\bq
\label{rec}
\begin{array}{l}
\frac{\partial G(x) F(x,u)}{\partial x} = \left(\frac{\partial G(x)F(x,u)}{\partial x}\right)^\top \\[2mm]
\sigma \, \frac{\partial H(x,u)}{\partial u} = \left(\frac{\partial H(x,u)}{\partial u}\right)^\top \sigma \\[2mm]
G(x) \frac{\partial F(x,u)}{\partial u} = \left(\frac{\partial H(x,u)}{\partial x}\right)^\top \sigma
\end{array}
\eq

\end{proposition}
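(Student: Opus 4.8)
The plan is to recognize the ambient space as a cotangent bundle, reduce the Lagrangian condition to the vanishing of a single two-form, and then read off the three identities coordinate-wise.

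First I would identify the ambient symplectic manifold. Since $\U=\Y^*$, the product $T^*\X \times \U \times \Y$ is exactly the cotangent bundle $T^*(\X\times\U)$, with base point $(x,u)$ and fibre coordinates $(p,y)$, where $p\in T_x^*\X$ is conjugate to $x$ and $y\in\Y=\U^*$ is conjugate to $u$. Its canonical symplectic form is $\omega = dp\wedge dx + dy\wedge du$. The graph in question is the section of this bundle defined by $p=a(x,u)$, $y=b(x,u)$, where $a(x,u):=G(x)F(x,u)$ and $b(x,u):=\sigma H(x,u)$; write $\phi:(x,u)\mapsto(x,u,a,b)$ for the parametrizing map. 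Its image has dimension $n+m=\dim(\X\times\U)=\tfrac12\dim T^*(\X\times\U)$. As recalled just after the definition of a Lagrangian submanifold, maximality of an isotropic submanifold forces its dimension to be exactly half the ambient dimension, and conversely a half-dimensional isotropic submanifold is automatically maximal. Hence the graph is Lagrangian if and only if $\omega$ restricts to zero on it, i.e. $\phi^*\omega=0$.

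Next I would compute $\phi^*\omega=da\wedge dx+db\wedge du = d\theta$ with $\theta=a\,dx+b\,du$, so that the Lagrangian condition is simply $d\theta=0$. Expanding $da_i=\sum_j\frac{\partial a_i}{\partial x_j}dx_j+\sum_l\frac{\partial a_i}{\partial u_l}du_l$ and similarly for $db_k$, I would collect the resulting two-form into its three homogeneous pieces: the $dx_j\wedge dx_i$ part, the $du_l\wedge du_k$ part, and the mixed $du_l\wedge dx_i$ part. The vanishing of $\phi^*\omega$ is then equivalent to the simultaneous vanishing of all three.

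Reading off each piece completes the equivalence in both directions at once. The $dx\wedge dx$ part vanishes iff $\partial a/\partial x$ is symmetric, which is the first line of \eqref{rec}. The $du\wedge du$ part vanishes iff $\partial b/\partial u=\sigma\,\partial H/\partial u$ is symmetric; since $\sigma=\sigma^\top$ this is exactly the second line. The mixed part vanishes iff $\partial a/\partial u=(\partial b/\partial x)^\top$; using that $G$ depends on $x$ only, so $\partial a/\partial u=G\,\partial F/\partial u$, and that $\sigma$ is constant, so $(\partial b/\partial x)^\top=(\partial H/\partial x)^\top\sigma$, this is precisely the third line. The computation itself is routine bookkeeping of wedge products; the only genuine content lies in the two structural observations at the start — that the ambient space is a cotangent bundle, so the graph is a section governed by the generating one-form $\theta$, and that the half-dimensional count upgrades isotropic to Lagrangian for free. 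I expect the one mild obstacle to be purely notational: keeping the $n\times m$ block $G\,\partial F/\partial u$ transposed correctly against the $m\times n$ block $\sigma\,\partial H/\partial x$ in the mixed term, and remembering that the $x$-derivative of $a=G(x)F$ carries an extra $(\partial G/\partial x)F$ contribution that is already absorbed into the symmetry requirement on the full matrix $\partial(GF)/\partial x$.
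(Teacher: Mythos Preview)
Your proposal is correct and follows essentially the same approach as the paper: both restrict the symplectic form to the graph (which is parametrized by $(x,u)$, hence of dimension $n+m$), set it to zero, and read off the three conditions from the $(x,x)$, $(u,u)$, and $(x,u)$ blocks. The paper carries this out by pairing two generic tangent vectors against the matrix of the symplectic form, whereas you phrase it as $\phi^*\omega = d\theta = 0$ with $\theta = a\,dx + b\,du$; this is the same computation in differential-form language, with the minor bonus that your formulation makes the link to the subsequent pseudo-gradient (local potential) description immediate via Poincar\'e's lemma.
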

\begin{proof}
The symplectic form on $T^* \X \times \U \times \Y$ is zero on $\cL$ if for all two tangent vectors to $\cL$ at $(x,u) \in \X \times \U$, indexed by $1$ and $2$,
\bq
\bma 
\delta_2 x  \\[2mm] \delta_2 u \\[2mm]  \delta_2 [G(x)F(x,u)] \\[2mm] \delta_2 [\sigma H(x,u)] \ema^\top
\bma
0 & 0 & -I_n & 0 \\[2mm]
0 & 0 & 0 & -I_m \\[2mm]
I_n & 0 & 0 & 0 \\[2mm]
0 & I_m & 0 & 0
\ema
\bma
\delta_1x \\[2mm] \delta_1u \\[2mm]   \delta_1 [G(x)F(x,u)] \\[2mm]  \delta_1 [\sigma H(x,u)]
\ema
= 0
\eq
for any pair of tangent vectors $\delta_1 x, \delta_2 x, \delta_1 u, \delta_2$ to $\X \times \U$. Written out this immediately yields \eqref{rec}. Furthermore, by definition of $\cL$ we have $\dim \cL = \dim \X + \dim \U = n + m$, and thus $\cL$ is Lagrangian.
\end{proof}
An important subclass are \emph{affine} nonlinear systems, where
\bq
\label{systemaffine}
F(x,u)= f(x) + g(x)u, \; H(x,u)=h(x) + k(x)
\eq
for an $n \times m$ matrix $g(x)$ and $m \times m$ matrix $k(x)$, both depending smoothly on $x$. In this case the conditions for reciprocity are seen to specialize to
\bq
\label{rec1}
\begin{array}{l}
 \frac{\partial G(x)f(x)}{\partial x} = \left(\frac{\partial G(x)f(x)}{\partial x}\right)^\top \\[2mm]
\frac{\partial G(x)g_j(x)}{\partial x} = \left(\frac{\partial G(x)g_j(x)}{\partial x}\right)^\top , \quad j=1, \cdots, m\\[2mm]
\sigma k(x) = k^\top (x) \sigma \\[2mm]
G(x) g(x) = \left(\frac{\partial h(x)}{\partial x}\right)^\top \sigma 

\end{array}
\eq
where $g_j(x)$ is the $j$-th column of the matrix $g(x)$.

\subsection{Pseudo-gradient systems}
Nonlinear systems satisfying the reciprocity conditions \eqref{rec} are also known as \emph{pseudo-gradient systems}; see \cite{crouch, goncalves, cortes, vds83} and the references quoted therein. Indeed, the reciprocity conditions \eqref{rec} are precisely the \emph{integrability} conditions for the local existence of a potential function.
\begin{definition}
A nonlinear system $\Sigma$ is a \emph{pseudo-gradient system} (with respect to a pseudo-Riemannian metric $G$ and signature matrix $\sigma$) if there exists locally a function $V: \X \times \U \to \mathbb{R}$ such that the system is of the form
\bq
\label{gradient}
\begin{array}{rcl}
G(x) \dot{x} & = &  -\frac{\partial V}{\partial x}(x,u) \\[2mm]
\sigma y & = & - \frac{\partial V}{\partial u}(x,u) 
\end{array}
\eq
i.e., $G(x)F(x,u) =  - \frac{\partial V}{\partial x}(x,u), \sigma H(x,u) = -\frac{\partial V}{\partial u}(x,u)$. If $G(x)>0$ (and thus $G$ is a true Riemannian metric), then $\Sigma$ is a \emph{gradient system}. The function $V$ is called a \emph{potential function}.
\end{definition}

An important subclass of pseudo-gradient systems occurs if the pseudo-Riemannian metric $G$ is of a special type, namely a \emph{Hessian} one. See e.g. \cite{shim,duistermaat} for more background on Hessian Riemannian metrics. 
\begin{definition}
Let $\X$ be a connected convex domain in some $n$-dimensional linear space. The pseudo-Riemannian metric $G$ on $\X$ is Hessian if there exists a function $K: \X \to \mR$ such that the $(i,j)$-th element of the matrix $G(x)$ is given as
\begin{equation}\label{int}
G_{ij}(x) = \frac{\partial^2 K}{\partial x_i \partial x_j}(x), \quad i,j=1, \cdots,n
\end{equation}
\end{definition}
\begin{remark}
{\rm More generally, $\X$ can be taken to be a \emph{manifold} endowed with a \emph{flat connection}, where the Hessian is defined with respect to this connection on $\X$; see e.g. \cite{shim}. In fact, for a connection on $\X$ with Christoffel symbols $\Gamma_{ij}^k$ the Hessian of $K: \X \to \mR$ is defined as the $n \times n$ matrix with $(i,j)$-th element $\frac{\partial^2 K}{\partial x_i \partial x_j}(x) - \sum_{k=1}^n \Gamma_{ij}^k(x) \frac{\partial K}{\partial x_k}(x)$.}
\end{remark}
The Hessian pseudo-Riemannian metric determined by $K$ will be denoted by $\nabla^2 K(x)$. Note that by definition the Hessian $\nabla^2 K(x)$ is assumed to be \emph{invertible}.
The following characterization of Hessian pseudo-Riemannian metrics was provided in \cite{shim,vds11}.
\begin{proposition}
A pseudo-Riemannian metric $G$ on an $n$-dimensional convex connected domain $\X$ is Hessian if and only if the components $G_{ij}$ of the matrix $G$ satisfy
\begin{equation}\label{inthessian}
\frac{\partial G_{jk}}{\partial x_i}(x) = \frac{\partial G_{ik}}{\partial x_j}(x), \quad i,j,k=1, \cdots,n
\end{equation}
\end{proposition}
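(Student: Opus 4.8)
The statement is an ``if and only if,'' so I would treat the two directions separately. The forward implication (Hessian $\Rightarrow$ (\ref{inthessian})) is immediate and requires nothing beyond smoothness: if $G_{ij} = \frac{\partial^2 K}{\partial x_i \partial x_j}$, then $\frac{\partial G_{jk}}{\partial x_i} = \frac{\partial^3 K}{\partial x_i \partial x_j \partial x_k}$, which is invariant under interchange of $i$ and $j$ by equality of mixed partial derivatives (Schwarz's theorem); hence $\frac{\partial G_{jk}}{\partial x_i} = \frac{\partial G_{ik}}{\partial x_j}$.

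The real content is the converse: given a symmetric invertible $G(x)$ whose entries satisfy (\ref{inthessian}), I would construct a potential $K$ with $\nabla^2 K = G$ by integrating in two stages, in each case invoking the Poincar\'e lemma --- available precisely because $\X$ is convex, hence contractible, so that closed forms on $\X$ are exact. For the first stage, fix an index $k$ and consider the $1$-form $\eta_k := \sum_j G_{jk}(x)\, dx_j$. Its exterior derivative is $d\eta_k = \sum_{i<j}\left(\frac{\partial G_{jk}}{\partial x_i} - \frac{\partial G_{ik}}{\partial x_j}\right) dx_i \wedge dx_j$, which vanishes exactly by hypothesis (\ref{inthessian}). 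Thus each $\eta_k$ is closed, and the Poincar\'e lemma supplies functions $\phi_k : \X \to \mR$ with $\eta_k = d\phi_k$, i.e. $\frac{\partial \phi_k}{\partial x_j} = G_{jk}$ for all $j$.

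For the second stage, I would form the $1$-form $\beta := \sum_k \phi_k\, dx_k$, whose exterior derivative is $d\beta = \sum_{j,k} \frac{\partial \phi_k}{\partial x_j}\, dx_j \wedge dx_k = \sum_{j,k} G_{jk}\, dx_j \wedge dx_k$. This is zero because $G_{jk} = G_{kj}$ is \emph{symmetric} while $dx_j \wedge dx_k$ is antisymmetric. Hence $\beta$ is closed, and a second application of the Poincar\'e lemma yields $K : \X \to \mR$ with $\frac{\partial K}{\partial x_k} = \phi_k$. Combining the two stages gives $\frac{\partial^2 K}{\partial x_j \partial x_k} = \frac{\partial \phi_k}{\partial x_j} = G_{jk}$, so $G = \nabla^2 K$ and $G$ is Hessian.

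The main obstacle --- and the place where the hypotheses are genuinely consumed --- is organizing the converse so that each integration is justified: the first application of Poincar\'e uses exactly the integrability condition (\ref{inthessian}), the second uses the symmetry of the metric $G$, and convexity of $\X$ is what guarantees exactness of the closed forms. An alternative, more computational route would write $K$ directly via a line/iterated integral such as $K(x) = \int_0^1 (1-t)\sum_{i,j} (x-x_0)_i (x-x_0)_j\, G_{ij}(x_0 + t(x-x_0))\, dt$ and verify $\nabla^2 K = G$ by differentiating under the integral sign; but that verification again forces one to invoke (\ref{inthessian}) to symmetrize the integrand, so the two-stage Poincar\'e argument is cleaner and is the route I would take.
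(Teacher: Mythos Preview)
Your proof is correct and follows essentially the same two-stage integration argument as the paper: first use condition~(\ref{inthessian}) to produce functions $\phi_k$ (the paper's $\chi_k$) with $\partial\phi_k/\partial x_j = G_{jk}$, then use the symmetry $G_{jk}=G_{kj}$ to integrate once more and obtain $K$. The only difference is cosmetic: you phrase each step as closedness of a $1$-form plus the Poincar\'e lemma, whereas the paper speaks directly of integrability conditions on a convex connected domain.
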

\begin{proof}
(Only if) If $G_{ij}(x) = \frac{\partial^2 K(x)}{\partial x_i \partial x_j}$ then obviously $\frac{\partial G_{jk}(x)}{\partial x_i} = \frac{\partial^3 K(x)}{\partial x_i \partial x_j \partial x_k}=\frac{\partial G_{ik}(x)}{\partial x_j}$.
(If) Condition \eqref{inthessian}, together with the fact that $\X$ is a convex connected domain, guarantees the existence of functions $\chi_k$ such that $G_{jk}(x) = \frac{\partial \chi_k}{\partial x_j}(x), j,k=1, \cdots,n$. Then by symmetry of $G(x)$
\begin{equation}
\label{hessian1}
\frac{\partial \chi_k}{\partial x_j}(x) = G_{jk}(x) = G_{kj}(x) = \frac{\partial \chi_j}{\partial x_k}(x), \quad j,k=1, \cdots,n,
\end{equation}
which is the integrability condition guaranteeing the existence of a function $K$ satisfying $\chi_j(x) = \frac{\partial K(x)}{\partial x_j}, j=1,\cdots,n$.
\end{proof}
Interestingly, the reciprocity conditions \eqref{rec} simplify as follows in case the pseudo-Riemannian metric is Hessian.
\begin{proposition}
\label{prop:monotone}
Consider an $n$-dimensional convex connected domain $\X$ endowed with a Hessian pseudo-Riemannian metric $G(x)=\nabla^2 K(x)$. Then the nonlinear system \eqref{system} is reciprocal (and thus a pseudo-gradient system \eqref{gradient}) if and only if the second and third line of \eqref{rec} are satisfied, and furthermore for all $x,u$
\bq
\label{rechessian}
G(x)\frac{\partial F(x,u)}{\partial x} = \left(\frac{\partial F(x,u)}{\partial x}\right)^\top G(x) 
\eq
\end{proposition}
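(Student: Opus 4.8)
The plan is to show that, under the Hessian hypothesis $G(x)=\nabla^2 K(x)$, the first line of the reciprocity conditions \eqref{rec} is equivalent to \eqref{rechessian}, while the second and third lines remain as they stand. The latter point is immediate: the second and third lines of \eqref{rec} involve only derivatives of $H$ and $F$ (with respect to $u$ and $x$) and never a derivative of $G$ itself, so the Hessian structure has no bearing on them. Hence the whole content of the proposition lies in rewriting the first line.

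First I would expand the Jacobian $\frac{\partial [G(x)F(x,u)]}{\partial x}$ by the product rule. Writing the $(i,j)$ entry with $[G(x)F]_i = \sum_k G_{ik}(x)F_k(x,u)$ gives
\bq
\left(\frac{\partial [GF]}{\partial x}\right)_{ij} = \sum_k \frac{\partial G_{ik}}{\partial x_j}(x)\, F_k(x,u) + \left(G(x)\frac{\partial F}{\partial x}(x,u)\right)_{ij}.
\eq
Interchanging $i$ and $j$ yields the transposed entry; using the symmetry $G=G^\top$, its $G$-weighted part equals $\left((\frac{\partial F}{\partial x})^\top G\right)_{ij}$, while its $G$-derivative part is $\sum_k \frac{\partial G_{jk}}{\partial x_i} F_k$. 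Thus the first line of \eqref{rec}, read entrywise, is the requirement
\bq
\sum_k \frac{\partial G_{ik}}{\partial x_j} F_k + \left(G\frac{\partial F}{\partial x}\right)_{ij} = \sum_k \frac{\partial G_{jk}}{\partial x_i} F_k + \left(\left(\frac{\partial F}{\partial x}\right)^\top G\right)_{ij}
\eq
for all $i,j$ and all $(x,u)$.

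The key---and only---use of the Hessian hypothesis is to invoke the integrability condition \eqref{inthessian}, which gives $\frac{\partial G_{ik}}{\partial x_j} = \frac{\partial G_{jk}}{\partial x_i}$. This makes the two $G$-derivative sums over $k$ coincide, so they cancel, leaving exactly $\left(G\frac{\partial F}{\partial x}\right)_{ij} = \left((\frac{\partial F}{\partial x})^\top G\right)_{ij}$, i.e.\ the matrix identity \eqref{rechessian}. The implication reverses: adding the (now equal) $G$-derivative terms back to \eqref{rechessian} reproduces the first line of \eqref{rec}, so the two are genuinely equivalent under the Hessian assumption.

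I do not expect a serious obstacle. Once the product rule is applied, the entire argument is the observation that \eqref{inthessian} forces the inhomogeneous, derivative-of-$G$ terms to be symmetric in $(i,j)$ and hence to drop out of the symmetry requirement. The only care required is index bookkeeping and repeated use of $G=G^\top$ to recognize the two surviving terms as $G\frac{\partial F}{\partial x}$ and its transpose $(\frac{\partial F}{\partial x})^\top G$.
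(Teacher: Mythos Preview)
Your proposal is correct and follows essentially the same route as the paper: expand the first line of \eqref{rec} componentwise via the product rule, invoke the Hessian integrability condition \eqref{inthessian} to make the $\partial G/\partial x$ contributions symmetric in $(i,j)$ so they cancel, and conclude that what remains is exactly \eqref{rechessian}. Your write-up is in fact slightly more complete, since you explicitly note the converse direction and that the second and third lines are unaffected.
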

\begin{proof}
The first line of \eqref{rec} can be written out as (omitting arguments)
\bq
\label{integrability}
\sum_\ell G_{k\ell} \frac{\partial }{\partial x_i}F_\ell + \sum_\ell \frac{\partial G_{k\ell}}{\partial x_i}F_\ell   = 
\sum_\ell G_{i\ell} \frac{\partial }{\partial x_k}F_\ell + \sum_\ell \frac{\partial G_{i\ell}}{\partial x_k}F_\ell 
\eq
for all $i,k,\ell=1, \cdots, n,$, with subscripts $i,k,\ell$ denoting the $i$-th, $k$-th, and $\ell$-th component. 
Since $G$ is Hessian, \eqref{inthessian} holds, implying that 
\bq
\sum_\ell \frac{\partial G_{k\ell}}{\partial x_i}(x)F_\ell (x,u) = \sum_\ell \frac{\partial G_{i\ell}}{\partial x_k}(x)F_\ell (x,u)
\eq
Thus the first line of \eqref{rec} reduces to \eqref{rechessian}.
\end{proof}
Hessian pseudo-gradient systems admit the following \emph{alternative} description, which turns out to be related to a \emph{port-Hamiltonian} formulation. Consider the pseudo-gradient system \eqref{gradient} with Hessian pseudo-Riemannian metric $G(x) = \nabla^2 K(x)$. Let $z=\nabla K(x)$ (the row vector of partial derivatives of $K$). By the assumptions on $K$ (namely $\nabla^2 K(x)$ invertible) the mapping $x \mapsto z=\nabla K(x)$ is an injective mapping, with well-defined inverse on its co-domain $\Z$ given by $z \mapsto x=\nabla K^*(z)$, where $K^*$ is the Legendre transform of $K$ (see Appendix A). Since $\dot{z}= \nabla^2 K(x) \dot{x} $, it follows that \eqref{gradient} can be rewritten as the system on $\Z$ given as
\bq
\label{gradienthessian}
\begin{array}{rcl}
\dot{z} & = &  -\frac{\partial V}{\partial x}(\nabla K^*(z),u) \\[2mm]
\sigma y & = & - \frac{\partial V}{\partial u}(\nabla K^*(z),u) 
\end{array}
\eq
By imposing additional \emph{convexity} conditions on the potential function $V$ these systems fall into the class of maximal (cyclically) monotone port-Hamiltonian systems as explored in \cite{cams22,cams13}.
\begin{proposition}
Assume that $V: \X \times \U \to \mR$ is a \emph{convex} function of $x,u$. Then \eqref{gradienthessian} for $\sigma=-I$ is a maximal cyclically monotone port-Hamiltonian system, with Hamiltonian $K^*(z)$. Alternatively, if $V(x,u)$ is convex in $x$ for constant $u$ and concave in $u$ for constant $x$, then \eqref{gradienthessian} for $\sigma=I$ is a maximal monotone port-Hamiltonian system.
\end{proposition}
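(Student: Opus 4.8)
The plan is to recognise that, after the change of state variable $z = \nabla K(x)$, the system \eqref{gradienthessian} is \emph{already} in the effort--flow form of a port-Hamiltonian system with Hamiltonian $\cH(z) := K^*(z)$, so that the entire content of the proposition reduces to checking that the static relation coupling the flows $(-\dot z, y)$ to the efforts $(\partial_z \cH, u)$ is maximal (cyclically) monotone. The gradient structure does the heavy lifting: this coupling relation is assembled directly from the gradient of $V$, so its monotonicity becomes a convexity statement about $V$.

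First I would record the identification furnished by Legendre duality. Since $z = \nabla K(x)$ and $G = \nabla^2 K$ is invertible, Appendix A gives $x = \nabla K^*(z)$, i.e. $\partial_z \cH(z) = \nabla K^*(z) = x$; thus the effort conjugate to the storage state $z$ is exactly $x$, and \eqref{gradienthessian} reads
\[
-\dot z = \frac{\partial V}{\partial x}(x,u), \qquad \sigma y = -\frac{\partial V}{\partial u}(x,u), \qquad x = \partial_z \cH(z).
\]
I would then recall from \cite{cams13,cams22} the defining form of a maximal (cyclically) monotone port-Hamiltonian system,
\[
\begin{bmatrix} -\dot z \\ y \end{bmatrix} \in \R\!\begin{bmatrix} \partial_z \cH(z) \\ u \end{bmatrix},
\]
with $\R$ a maximal (cyclically) monotone relation --- monotonicity of $\R$ being exactly the property that renders the system dissipative with respect to the supply rate $y^\top u$.

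For the first statement ($\sigma = -I$) I would substitute $y = \partial V/\partial u$, so that the relation becomes $(-\dot z, y) = \left(\partial_x V, \partial_u V\right)(x,u) = \nabla V(x,u)$; that is, $\R = \partial V$, the subdifferential of $V$, evaluated at $(\partial_z \cH(z), u)$. By Rockafellar's theorem the subdifferential of a proper lower semicontinuous convex function is maximal cyclically monotone, which is precisely the assertion. For the second statement ($\sigma = I$) I would substitute $y = -\partial V/\partial u$, obtaining $\R: (x,u) \mapsto \left(\partial_x V(x,u), -\partial_u V(x,u)\right)$, the monotone operator associated with the convex--concave (saddle) function $V$. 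Here I would invoke Rockafellar's companion result that the operator attached to a closed proper saddle function is maximal monotone, though in general \emph{not} cyclically monotone, which yields the weaker conclusion claimed in this case.

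The algebra is immediate once $\sigma$ is substituted and the flow/effort pairing is read off; the two genuine points are (i) matching sign conventions so that $(-\dot z, y)$ against $(\partial_z \cH, u)$ fits the port-Hamiltonian template, and (ii) the maximality assertions. The latter is the main obstacle: maximal (cyclic) monotonicity is not a consequence of differentiability of $V$ alone but rests on $V$ being a closed proper convex, respectively saddle, function, and it is here that I would lean on Rockafellar's monotonicity theorems rather than attempt a hands-on verification. I would also flag that if $V$ is merely smooth and convex--concave on an open domain one obtains monotonicity of $\R$ directly, whereas maximality invokes the closedness/properness hypotheses built into the cited port-Hamiltonian framework.
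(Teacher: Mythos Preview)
Your proposal is correct and follows essentially the same approach as the paper: the paper simply refers to \cite{cams22} and notes that the key ingredient is ``the fact that the graph of the derivative of a convex function is a maximal cyclically monotone relation,'' which is precisely what you spell out (identifying $x=\nabla K^*(z)=\partial_z\cH(z)$, reading off the static relation $\R$ as $\nabla V$ for $\sigma=-I$ or the saddle operator for $\sigma=I$, and invoking Rockafellar's theorems). Your added remark on the closedness/properness hypotheses needed for maximality is a fair caveat but does not depart from the paper's line of argument.
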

We refer to \cite{cams22} for the proof; exploiting the fact that the graph of the derivative of a convex function is a maximal cyclically monotone relation. Monotone port-Hamiltonian systems enjoy the incremental passivity property
\bq
<\frac{\partial K^*}{\partial z}(z_1) - \frac{\partial K^*}{\partial z}(z_2),\dot{z}_1 - \dot{z}_2>\;  \leq \; <y_1 -y_2, u_1 -u_2>
\eq
for any two trajectories $(z_i,u_i,y_i), i= 1,2$, of \eqref{gradienthessian}. Note however that the Hamiltonian $K^*$ is \emph{not} necessarily bounded from below, in which case it does not qualify as a storage function.

As an example, let $V(x,u)= P(x) - x^\top gu$ for some matrix $g$ and convex internal potential function $P$. Then $V(x,u)$ is convex in $x$ for constant $u$ and concave in $u$ for constant $x$, leading to the maximal monotone port-Hamiltonian system
\bq
\begin{array}{rcl}
\dot{z} & = &  -\frac{\partial P}{\partial x}(\nabla K^*(z)) + gu \\[2mm]
y & = &  g^\top \nabla K^*(z) 
\end{array}
\eq

\subsection{External characterization of pseudo-gradient systems}
A main question concerns the \emph{input-output characterization} of nonlinear reciprocity.
Recall that in the linear case reciprocity is simply equivalent to symmetry of the impulse response matrix or of the transfer matrix. However (as already noted in \cite{willems72b}), this is \emph{not} easily extended to nonlinear systems, or even to time-varying linear systems. 

In \cite{cortes} an external characterization of pseudo-gradient systems has been given along the following lines.
Consider the class of \emph{affine} nonlinear systems \eqref{systemaffine} with $k=0$ and signature matrix $\sigma=I$, and consider along any solution $x(t),u(t),y(t)$ of the system the \emph{variational system} \cite{crouchvds}
\bq
\begin{array}{rcl}
\dot{\delta x}(t) & = & \frac{\partial f}{\partial x}(x(t)\delta x(t) + \sum_j u_j\frac{\partial g_j}{\partial x}(x(t))\delta x(t) + \sum_j g_j(x(t)) \delta u(t) \\[2mm]
\delta y(t) & = & \frac{\partial h}{\partial x}(x(t))\delta x(t)
\end{array}
\eq
where $\delta u, \delta x, \delta y$ are the \emph{variational inputs, states, and outputs}. The main theorem in \cite{cortes} shows that, under technical conditions including a form of minimality, \emph{given} any torsion-free connection on $\X$ with Christoffel symbols $\Gamma^a_{bc}$ (see Appendix B) the system is a pseudo-gradient system with respect to a pseudo-Riemannian metric $G$ if and only if along each solution of the system the input-output behavior of the \emph{variational system} (with inputs $\delta u(t)$ and outputs $\delta y(t)$) is equal to the input-output behavior of the \emph{dual variational system} given as (leaving out arguments $x(t), t$)
\bq
\label{duals}
\begin{array}{rcl}
\dot{p}_b & = & \left(\frac{\partial f_a}{\partial x_b} + 2 \Gamma^a_{bc} f_c\right)p_a + \sum_{j=1}^m u_j \left(\frac{\partial g_{ja}}{\partial x_b} + 2 \Gamma^a_{bc} g_{jc}\right)p_a +
\sum_{j=1}^m u_j^d\frac{\partial h_j}{\partial x_b} \\[3mm]
y_j^d & = & \sum_{i=1}^m p_i g_{ji} \quad j=1, \cdots,m,
\end{array}
\eq 
for $b=1, \cdots, n$, using Einstein's summation convention over index $a$. Here $u_j^d$ and $y_j^d$, $j=1,\cdots, m,$ are the dual variational inputs, respectively, dual variational outputs. Furthermore, there is exactly \emph{one} such pseudo-Riemannian metric $G$ whose Levi-Civita connection is equal to the given torsion-free connection. In fact, the pseudo-Riemannian metric $G(x)$ is the state space isomorphism between the variational state $\delta x$ and the dual variational state $p$.

With $\dot{x}_c$ denoting the $c$-th component of $\dot{x}$ in the original nonlinear system \eqref{systemaffine}, note that the first line of \eqref{duals} can be rewritten suggestively as
\bq
\dot{p}_b  =  \frac{\partial f_a}{\partial x_b} p_a + \sum_{j=1}^m u_j \frac{\partial g_{ja}}{\partial x_b}p_a + 2 \Gamma^a_{bc}  p_a \dot{x}_c
\eq
The precise implications of this theorem for the properties of the variational systems of a (Hessian) pseudo-gradient system are an important topic for further research. 
The same holds for the external characterization of \emph{Hessian} pseudo-gradient systems. (Note that the Christoffel symbols of the Levi-Civita connection of a Hessian pseudo-Riemannian metric $\nabla^2 K(x)$ take a special form; cf. Appendix B.)


%
%

\section{Combining nonlinear reciprocity with passivity}\label{sec:combination}
In this section we consider pseudo-gradient systems that are also passive, and aim at extending the linear theory as summarized in Sections \ref{subsec:recpas} and \ref{subsec:ph}. 

\subsection{Nonlinear passivity}
We start with considering passivity of nonlinear systems.
\begin{definition}
A nonlinear system $\Sigma$ given by \eqref{system} is passive if there exists $S: \X \to \mR$, which is bounded from below, such that
\bq
\label{passive}
\nabla S(x) F(x,u)
(x,u) \leq u^\top H(x,u)
\eq
for all $x \in \X, u \in \U$, where $\nabla S(x)$ is the row vector of partial derivatives of $S$. It is \emph{cyclo-}passive if $S$ is not necessarily bounded from below.
\end{definition}
From a geometric perspective the \emph{dissipation inequality} \eqref{passive} amounts to the following. 
Consider the system map $\Sigma_m$ (with an added minus sign, motivated by power flow considerations \cite{jeltsema}), taken together with the map $x \mapsto \nabla S(x)$, i.e., the map
\bq
(x,u) \mapsto (-F(x,u),H(x,u),\nabla S(x),u) \in T_x\X \times T_x^*\X \times \Y \times \U
\eq
Then the image of any $(x,u)$ should be an element of $T_x\X \times T_x^*\X \times \Y \times \U$ that is \emph{nonnegative} with respect to the duality pairing between $T_x\X$ and $T_x^*\X$, and the duality pairing between $\U$ and $\Y$. That is, for any $x,u$
\bq
<-F(x,u), \nabla S(x) > + <H(x,u), u >  \, \geq \, 0
\eq

The following observation will be used crucially in the rest of this section. Suppose $\X$ is an $n$-dimensional convex connected domain, and consider a cyclo-storage function $S: \X \to \mR$ with $\nabla^2 S(x)$ invertible. Denote $z:=\nabla S(x)$, and let $\Z$ be the co-domain of the mapping $x \mapsto \nabla S(x)$. Then consider the Legendre transform $S^*: \Z \to \mR$. It follows that (see Appendix A for more information) $x=\nabla S^*(z)$. Using $\dot{x} = \nabla^2 S^*(z) \dot{z}$, the system \eqref{system} is expressed in the new coordinates $z$ as
\bq
\begin{array}{rcl}
\nabla^2S^*(z) \dot{z} & = & F(\nabla S^*(z),u) \\[2mm]
y &= & H(\nabla S^*(z),u)
\end{array}
\eq
Then consider the function $\widetilde{S}(z):= S(\nabla S^*(z))$; i.e., the cyclo-storage function $S$ \emph{expressed in the $z$-coordinates}. It is in general \emph{differerent} from $S^*(z)$. In fact, equality holds if and only if $S$ is \emph{homogeneous of degree $2$} up to a constant; cf. Proposition \ref{A:prop}. 
The function $\widetilde{S}$ has the following properties.
\begin{proposition}
\label{tildeproperties}
Let $S: \X \to \mR$ be such that $\nabla^2 S(x)$ is invertible for all $x$, and let $S^*: \Z \to \mR$ be its Legendre transform. Then $\widetilde S(z):= S(\nabla S^*(z))$ satisfies
\begin{enumerate}
\item
$\widetilde{S}(z) = z^\top \nabla S^*(z) - S^*(z)$.
\item
$\nabla \widetilde{S}(z) = z^\top \nabla^2 S^*(z)$, in particular $\nabla \tilde{S}(0) = 0$.
\item
If $\nabla^2 S(x) \geq 0$ then $\widetilde{S}(z) \geq \widetilde{S}(0)$ for all $z \in \Z$.
\end{enumerate}
\end{proposition}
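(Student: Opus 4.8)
The plan is to reduce everything to the elementary Legendre-transform identities collected in Appendix A, in particular the pointwise duality $x = \nabla S^*(z)$ holding precisely when $z = \nabla S(x)$, and its differential consequence $\nabla^2 S^*(z) = [\nabla^2 S(x)]^{-1}$ (obtained by differentiating $z = \nabla S(x)$ and $x = \nabla S^*(z)$ against one another). Part 1 is then immediate: evaluating the definition of the Legendre transform at $x = \nabla S^*(z)$ gives $S^*(z) = z^\top x - S(x)$, so that $S(x) = z^\top \nabla S^*(z) - S^*(z)$; since by definition $\widetilde S(z) = S(\nabla S^*(z))$ is this same $S(x)$, the claimed identity follows at once.

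For Part 2 I would differentiate the Part 1 identity with respect to $z$. Writing $x = \nabla S^*(z)$, the product rule applied to $z^\top \nabla S^*(z)$ yields a first-order piece with components $x_k$ plus the second-order piece $z^\top \nabla^2 S^*(z)$; the derivative of $S^*(z)$ contributes exactly the components $x_k$ once more. The two first-order pieces cancel, leaving $\nabla \widetilde S(z) = z^\top \nabla^2 S^*(z)$, and evaluation at $z = 0$ immediately gives $\nabla \widetilde S(0) = 0$.

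For Part 3 the conceptually cleanest route exploits that $z = \nabla S(x)$ is a bijection of $\X$ onto $\Z$ and that $\widetilde S(z) = S(x)$ under this change of variables. If $\nabla^2 S(x) \geq 0$ on the convex domain $\X$, then $S$ is convex, so the critical point $x_0 := \nabla S^*(0)$, which satisfies $\nabla S(x_0) = 0$, is a global minimizer; hence $S(x) \geq S(x_0) = \widetilde S(0)$ for every $x$, which is exactly $\widetilde S(z) \geq \widetilde S(0)$. A self-contained alternative, using Part 2 directly, is to integrate $\frac{d}{dt}\widetilde S(tz) = t\, z^\top \nabla^2 S^*(tz)\, z$ over $t \in [0,1]$; since $\nabla^2 S^*(tz) = [\nabla^2 S]^{-1} \geq 0$, the integrand is nonnegative, giving $\widetilde S(z) - \widetilde S(0) \geq 0$.

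The algebra is routine; the real care lies in the domain $\Z$. Both arguments implicitly require $0 \in \Z$ (equivalently, that $S$ possesses a critical point), and the line-integral version further needs the segment $\{tz : t \in [0,1]\}$ to remain in $\Z$, i.e., effectively that $\Z$ is star-shaped (or convex) about $0$. I expect this to be the main obstacle to a fully rigorous statement, and I would settle it from the standing hypotheses on $S$ and $\X$ in Appendix A, namely by showing that $\nabla S$ is a diffeomorphism of $\X$ onto a $\Z$ of the required shape, rather than treating it as automatic.
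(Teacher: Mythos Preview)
Your proposal is correct and matches the paper's approach: the paper defers Parts~1 and~2 to references (your explicit arguments are the standard ones), and your line-integral alternative for Part~3 is precisely the paper's argument, computing $\widetilde S(z)-\widetilde S(0)=\int_0^1 t\,z^\top\nabla^2 S^*(tz)\,z\,dt\geq 0$. Your additional convexity-based route for Part~3 and your caveat about needing $0\in\Z$ and the segment $\{tz\}\subset\Z$ are points the paper leaves implicit.
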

\begin{proof}
For the proof of the first two properties we refer to \cite{rockafellar,vds21}.  With regard to the last property, define the function $s(t,z):=\widetilde{S}(tz)$. Obviously, $s(1,z)=\widetilde{S}(z), s(0,z)=\widetilde{S}(0)$, while $\frac{\partial s}{\partial t}(t,z)= \nabla \widetilde{S}(tz) z$. Hence, by property (2)
\bq
\widetilde{S}(z) - \widetilde{S}(0)= \int_0^1 \frac{\partial s}{\partial t}(t,z) dt =  \int_0^1 \nabla \widetilde{S}(tz) z dt = \int_0^1 t z^\top \nabla^2 S^*(tz) z dt \geq 0
\eq
since $\nabla^2 S^*(z) \geq 0$.
\end{proof}
Compared with $S(x)$, the function $\widetilde{S}(z)$ has the special property that it has a minimum at $z=0$.
Furthermore, $\widetilde{S}(z)$ satisfies
\bq
\frac{d}{dt} \widetilde{S}(z) = \nabla S(\nabla S^*(z)) \nabla^2 S^*(z) \dot{z} = z^\top F(\nabla S^*(z),u) \leq u^\top H(\nabla S^*(z),u)
\eq
as follows by substituting $x= \nabla S^*(z)$ and $z= \nabla S(x)$ in \eqref{passive}. Thus if $S$ is a convex cyclo-storage function for \eqref{system}, then $\widetilde{S}$ satisfies the passivity inequality for the system expressed in $z$-coordinates, and in fact $\widetilde{S}$ is a \emph{storage function}. 

\subsection{Reciprocal port-Hamiltonian systems}
In order to motivate the approach presented in the next subsection on the combination of reciprocity with passivity in the nonlinear case, we will first consider reciprocity in the port-Hamiltonian case. See already \cite{vds11, jeltsema, vds19} for the main ideas exposed in this subsection.

Consider nonlinear port-Hamiltonian systems of the form
\bq
\label{isor}
\begin{array}{rcl}
\dot{z} & = & J(z) \nabla H(z) - R(\nabla H(z))+ g(z)u, \quad z \in \Z, u \in \mR^m \\[3mm]
y & = & g^\top (z) \nabla H(z), \quad y \in \mR^m
\end{array}
\eq
for some skew-symmetric matrix $J(z)$, where the mapping $R$ (corresponding to energy dissipation) satisfies $x^\top R(x) \geq 0$ for all $x$. The Hamiltonian $H$ represents the total storage energy of the system. For simplicity of exposition we will consider $\Z$ to be a coordinate neighborhood within $\mR^n$.
Throughout assume that the mapping from $z$ to $ x:= \nabla H(z)$ is {\it invertible} with invertible Hessian $\nabla^2 H(z)$. Then its inverse is given by $z = \nabla H^*(x)$, 
where $H^*(x)$ is the Legendre transform of $H$; cf. Appendix A. By substituting $z= \frac{\partial H^*}{\partial x}(x)$ into \eqref{isor} it follows that
\begin{equation}\label{eq:Brayton}
\frac{\partial^2 H^*}{\partial x^2}(x) \dot{x} = J(z)x -R(x) + g(z)u
\end{equation}
By finally substituting $z = \nabla H^*(x)$ one thus obtains a differential equation in the new state variables\footnote{The state variables $z$ are called the \emph{energy} variables in physical system modeling, since the Hamiltonian $H$ is expressed in them. The state variables $x=\nabla H(z)$ are called the \emph{co-energy} variables.} $x$. The system \eqref{eq:Brayton} turns out to be a passive Hessian pseudo-gradient system once we make a number of \emph{additional assumptions}.

First $(I)$, assume that there exist coordinates $z = (z_1, z_2)$ in which the matrices $J(z), g(z)$ are {\it constant}, and furthermore take the form
\begin{equation}\label{eq:aut1}
J = \begin{bmatrix} 0 & -P_c \\ P_c^\top & 0 \end{bmatrix}, \, \, g=\begin{bmatrix} g_1 \\ 0 \end{bmatrix}
\end{equation}
Second $(II)$, assume that in these coordinates the Hamiltonian $H$ \emph{splits} as $H(z_1,z_2) = H_1(z_1) + H_2(z_2)$, for certain functions $H_1$ and $H_2$. Writing accordingly $x=(x_1,x_2)$ with $x_1 =  \nabla H_1(z_1), x_2 =  \nabla H_2(z_2),$ it follows that the Legendre transform $H^*(x)$ splits as $H^*(x) =  H_1^*(x_1) + H_2^*(x_2)$. Then \eqref{eq:Brayton} takes the form
\begin{equation}\label{eq:Brayton1}
\begin{bmatrix} \nabla^2 H_1^*(x_1) & 0 \\ 0 & 
\nabla^2 H_2^*(x_2) \end{bmatrix}
\begin{bmatrix} \dot{x}_1 \\[2mm] \dot{x}_2 \end{bmatrix}
 = \begin{bmatrix} 0 & -P_c \\[2mm] P_c^\top & 0 \end{bmatrix} \begin{bmatrix} x_1 \\[2mm] x_2 \end{bmatrix} -
\begin{bmatrix} R_1(x) \\[2mm] R_2(x) \end{bmatrix} + \begin{bmatrix} g_1 \\[2mm] 0 \end{bmatrix}u
\end{equation}
Third $(III)$ assume that 
\begin{equation}
R_1(x) = \frac{\partial P_1}{\partial x_1}(x_1), \, \, R_2(x) = -\frac{\partial P_2}{\partial x_2}(x_2)
\end{equation}
for certain (Rayleigh dissipation) functions $P_1, P_2$. (This imposes an integrability condition on $R_1, R_2$.) Then, after multiplication of the equations in the last line in \eqref{eq:Brayton1} by $-1$, it follows that \eqref{eq:Brayton1} can be rewritten as
\begin{equation}\label{eq:Brayton2}
\begin{array}{rcl}
 \begin{bmatrix} \nabla^2 H_1^*(x_1) & 0 \\[2mm] 0 & 
-\nabla^2 H_2^*(x_2) \end{bmatrix}
\begin{bmatrix} \dot{x}_1 \\[2mm] \dot{x}_2 \end{bmatrix}& = &
- \begin{bmatrix} \frac{\partial P}{\partial x_1} \\[2mm] \frac{\partial P}{\partial 
x_2} \end{bmatrix} + \begin{bmatrix} g_1 \\[2mm] 0 \end{bmatrix}u \\[6mm]
y & = & g_1^\top x_1
\end{array}
\end{equation}
where $P$ is the {\it mixed-potential function} defined as
\begin{equation}
P(x_1, x_2) := P_1(x_1) + P_2(x_2) + x_1^TP_c x_2
\end{equation}
The equations \eqref{eq:Brayton2} define a Hessian pseudo-gradient system with respect to the Hessian pseudo-Riemannian metric $\nabla^2 K(x)$ with $K(x_1,x_2) = H_1^*(x_1) - H_2^*(x_2)$ and internl potential function $P$. By definition, the port-Hamiltonian system \eqref{isor} is cyclo-passive, and \emph{passive} if the Hamiltonian $H(z_1,z_2)= H_1(z_1) + H_2(z_2)$ is bounded from below. 

Thus if fourth $(IV)$ the functions $H_1, H_2$ are assumed to be bounded from below, then $\widetilde{H}$ defined as
\bq
\widetilde{H}(x_1,x_2) := H_1(\nabla H_1^*(x_1)) + H_2(\nabla H_2^*(x_2))
\eq
is a storage function for \eqref{eq:Brayton2}, and the system \eqref{eq:Brayton2} is a \emph{passive Hessian pseudo-gradient system}.

\subsection{Passive Hessian pseudo-gradient systems}
\label{subsec:pashes}
Combining general nonlinear reciprocity with passivity, fully extending the linear theory of \cite{willems72b} as discussed in Section \ref{subsec:recpas}, seems complicated. Therefore we will restrict ourselves to \emph{Hessian} pseudo-gradient systems. 

One of the problems in extending the linear theory, even in the Hessian case, is that the \emph{compatibility condition} $Q=GQ^{-1}G$ (see \eqref{complinear}) admits many, generally not equivalent, nonlinear generalizations. Indeed, in the linear case, $K(x)=\frac{1}{2}x^\top G x$ for some symmetric matrix $G$, and the storage function is $S(x) = \frac{1}{2}x^\top Q x$ for some symmetric $Q>0$. Hence the Legendre transform $S^*(z)$ is given as $\frac{1}{2}z^\top Q^{-1}z$, and thus the compatibility condition $Q=GQ^{-1}G$ can be formulated as the equality $S(x)=S^*(\nabla K(x))$; suggesting a nonlinear generalization. On the other hand, the compatibility condition \eqref{complinear} is also equivalent to $G=QG^{-1}Q$, which instead would suggest the (different!) nonlinear generalization $K(x)= K^*(\nabla S(x))$. 

On the other hand, in the previous subsection it was discussed how under the four assumptions $(I) - (IV)$ port-Hamiltonian systems can be formulated as passive Hessian pseudo-gradient systems. This will serve as the starting point for the approach taken in the current subsection.%

Consider a Hessian pseudo-gradient system \eqref{gradient}, with respect to a Hessian pseudo-Riemannian metric $G(x)=\nabla^2 K(x)$ and $\sigma=I$. Furthermore, consider, as before, a potential function $V(x,u)$ of the form $V(x,u)= P(x) - x^\top gu$ for some internal potential function $P(x)$ and constant input matrix $g$. Thus the Hessian pseudo-gradient system is given as
\bq
\label{gradienthessian1}
\begin{array}{rcl}
\nabla^2 K(x) \dot{x} & = &  - \frac{\partial P}{\partial x}(x) + gu \\[2mm]
y & = &  g^\top x
\end{array}
\eq
Suppose now that \eqref{gradienthessian1} is \emph{passive}, and that there exists a storage function of the form $S^*(\nabla S(x))$, for some $S:\X \to \mR$ with invertible Hessian matrix and Legendre transform $S^*$, and coordinates $x=(x_1,x_2)$ for $\X$ such that
\bq
S(x_1,x_2) = S_1(x_1) + S_2(x_2), \quad K(x_1,x_2) = S_1(x_1) - S_2(x_2)
\eq
This means that the Hessian pseudo-gradient system is given as
\bq
\begin{array}{rcl}
\bma \nabla^2 S_1(x_1) & 0 \\[2mm] 0 & -\nabla^2 S_2(x_2) \ema \bma \dot{x}_1 \\[2mm] \dot{x}_2 \ema & = &
- \bma \frac{\partial P}{\partial x_1}(x_1,x_2) \\[2mm] \frac{\partial P}{\partial x_1}(x_1,x_2)  \ema + \bma g_1 \\[2mm] g_2 \ema u \\[5mm]
y & = & \bma g^\top_1 & g^\top_2 \ema x
\end{array}
\eq
Furthermore, since $S^*(\nabla S(x))$ is assumed to be a storage function,
\bq
\begin{array}{l}
\frac{d}{dt}[S_1^*(\nabla S_1(x_1)) + S_2^*(\nabla S_2(x_2))] = x_1^\top \nabla^2 S_1 \dot{x}_1 + x_2^\top \nabla^2 S_2 \dot{x}_2 =\\[2mm]
-x_1^\top \frac{\partial P}{\partial x_1}(x_1,x_2) + x_2^\top \frac{\partial P}{\partial x_2}(x_1,x_2) + u^\top \bma g^\top_1 & -g^\top_2 \ema x \leq u^\top \bma g^\top_1 & 
g^\top_2 \ema x
\end{array}
\eq
for all $x_1,x_2,u$. This implies (substituting successively $x_1=0, x_2=0,u=0$)
\bq
g_2=0, \qquad x_1^\top \frac{\partial P}{\partial x_1}(x_1,0) \geq 0, \; \; x_2^\top \frac{\partial P}{\partial x_2}(0, x_2) \leq 0
\eq
This can be regarded as a nonlinear generalization of \eqref{portham1}.
%


\subsection{Nonlinear relaxation systems}
\label{subsec:relaxation}
A special class of nonlinear passive Hessian pseudo-gradient systems is the class of \emph{nonlinear relaxation systems}; cf. Section \ref{subsec:recpas} for the linear case. These systems will be defined as follows.
\begin{definition}
A nonlinear relaxation system is a gradient system \eqref{gradient} with Hessian Riemannian metric $G(x)=\nabla^2 K(x) >0$ and\footnote{For other choices of $\sigma$ one needs to adapt the condition \eqref{relpot}. See Example \ref{nonRC}.} $\sigma=I$, where the potential function $V$ is assumed to satisfy for all $x,u$
\bq
\label{relpot}
x^\top \frac{\partial V}{\partial x}(x,u) - u^\top \frac{\partial V}{\partial u}(x,u) \geq 0
\eq
\end{definition}
Nonlinear relaxation systems are much more amenable for analysis than general nonlinear (Hessian) pseudo-gradient systems.
Importantly, nonlinear relaxation systems are \emph{passive}, as stated in the next proposition.
\begin{proposition}
A nonlinear relaxation system is \emph{passive}, with storage function $S(x):= K^*(\nabla K(x))$. 
\end{proposition}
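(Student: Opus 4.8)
The plan is to recognize the proposed storage function $S(x) = K^*(\nabla K(x))$ as an instance of the transform $\widetilde{(\cdot)}$ from Proposition \ref{tildeproperties}, applied not to $K$ itself but to its Legendre transform $K^*$. Since $G = \nabla^2 K > 0$ on the convex connected domain $\X$, the function $K$ is convex and the Legendre transform is involutive, so $(K^*)^* = K$ and $\nabla (K^*)^* = \nabla K$. Hence, comparing with the definition $\widetilde{S}(z) := S(\nabla S^*(z))$, the function obtained by applying $\widetilde{(\cdot)}$ to $K^*$ is exactly $x \mapsto K^*(\nabla K(x)) = S(x)$. Moreover $\nabla^2 K^* = (\nabla^2 K)^{-1} = G^{-1}$ is invertible, so Proposition \ref{tildeproperties} is applicable with $K^*$ in the role of $S$.

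This identification immediately delivers the two facts needed. First, part (2) of Proposition \ref{tildeproperties} gives $\nabla S(x) = x^\top \nabla^2 K(x) = x^\top G(x)$. Second, since $G > 0$ implies $\nabla^2 K^* = G^{-1} > 0$, part (3) applies and yields $S(x) \geq S(0)$ for all $x$; thus $S$ is bounded from below, as required by the definition of passivity.

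It then remains to verify the dissipation inequality \eqref{passive}. Differentiating $S$ along a trajectory and using the gradient-system form \eqref{gradient} with $\sigma = I$,
\[
\tfrac{d}{dt} S(x) = \nabla S(x)\,\dot{x} = x^\top G(x)\,\dot{x} = -\,x^\top \tfrac{\partial V}{\partial x}(x,u).
\]
The defining relaxation condition \eqref{relpot} states $x^\top \tfrac{\partial V}{\partial x}(x,u) \geq u^\top \tfrac{\partial V}{\partial u}(x,u)$, so the right-hand side is bounded above by $-\,u^\top \tfrac{\partial V}{\partial u}(x,u)$. Finally the output equation of \eqref{gradient} with $\sigma = I$ reads $y = -\tfrac{\partial V}{\partial u}(x,u)$, whence $-\,u^\top \tfrac{\partial V}{\partial u}(x,u) = u^\top y = y^\top u$. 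Combining, $\tfrac{d}{dt} S(x) \leq y^\top u$, which is precisely \eqref{passive}.

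The computation of the dissipation inequality is short; the only genuine content is the boundedness-from-below claim, and the main step is the (perhaps non-obvious) observation that $S = K^* \circ \nabla K$ is the $\widetilde{(\cdot)}$-transform of $K^*$ rather than of $K$. I expect the one point needing care to be the bookkeeping of which function plays the role of $S$ in Proposition \ref{tildeproperties}: one must check that $\nabla^2 K^* = G^{-1}$ is invertible (so the proposition applies) and positive (so part (3) gives the lower bound), both of which follow from $G = \nabla^2 K > 0$.
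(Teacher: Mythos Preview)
Your proof is correct and follows essentially the same approach as the paper: both compute $\tfrac{d}{dt}S(x) = x^\top G(x)\dot{x} = -x^\top \partial V/\partial x \le -u^\top \partial V/\partial u = u^\top y$ and both invoke Proposition~\ref{tildeproperties} (applied to $K^*$ rather than $K$, i.e.\ with the roles of the $x$- and $z$-variables swapped) to obtain boundedness from below. Your explicit identification of $S$ as the $\widetilde{(\cdot)}$-transform of $K^*$ is exactly what the paper's phrase ``with the notation for $x$- and $z$-variables swapped'' is abbreviating.
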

\begin{proof}
By direct computation
\bq
\begin{array}{rcl}
\frac{d}{dt} S(x) &=  &\nabla K^*(\nabla K(x)) \nabla^2 K(x) \dot{x} = x^\top \nabla^2 K(x) \dot{x} \\[2mm]
 &= & -x^\top \frac{\partial V}{\partial x}(x,u) \leq - u^\top \frac{\partial V}{\partial u}(x,u) = u^\top y,
\end{array}
\eq 
where the inequality follows from \eqref{relpot}. Furthermore, Proposition \ref{tildeproperties} (with the notation for $x$- and $z$-variables swapped) implies that $S(x) \geq S(0)$ for all $x \in \X$, and thus $S$ is bounded from below and hence a true storage function.
\end{proof}
Note that, as mentioned before, in general $S(x)=K^*(\nabla K(x))$ is different from $K$, with equality if and only if $K$ is, up to a constant, homogeneous of degree $2$ (e.g., $K$ is quadratic); cf. Proposition \ref{A:prop}. Also note that the condition \eqref{relpot} can be interpreted as a weak convexity condition on $V$.

In the case that $V(x,u)= P(x) - \sum_{j=1}^m C_j(x)u_j$, condition \eqref{relpot} amounts to the inequality
\bq
x^\top \frac{\partial P}{\partial x}(x) - \sum_{j=1}^m x^\top \frac{\partial C_j}{\partial x}(x)u_j + u^\top \sum_{j=1}^m C_j(x) \geq 0
\eq
for all $x,u$, and thus to 
\bq
x^\top \frac{\partial P}{\partial x}(x) \geq 0, \quad C_j(x)= x^\top \frac{\partial C_j}{\partial x}(x), \quad j=1, \cdots,m
\eq
(i.e., $C_j(x)$ homogeneous of degree $1$; see also Appendix A).

\section{Examples}\label{sec:examples}
\begin{example}[Brayton-Moser formulation of nonlinear RLC-circuits, \cite{BM1,BM2}]
\label{ex:BM}
{\rm The Brayton-Moser formulation of RLC electrical networks with linear capacitors and inductors, and nonlinear resistors/conductors is given as the Hessian pseudo-gradient system
\begin{equation}\label{BM1}
\begin{bmatrix} L & 0 \\ 0 & -C \end{bmatrix} \dot{x}  =  - \frac{\partial P}{\partial x}(x) \, , \quad x = \begin{bmatrix} I \\ V \end{bmatrix},
\end{equation}
with $L$ a diagonal matrix of inductances, $C$ a diagonal matrix of capacitances, and $P(x) = P_1(I) + P_2(V) + I^\top \Lambda V$ the mixed-potential function. Here $P_1$ is the content function of the nonlinear resistors (parametrized by currents $I$), $P_2$ the co-content function of the conductors (parametrized by voltages $V$), while $I^\top \Lambda V$ is a coupling term for a certain matrix $\Lambda$ reflecting the topology of the network. Note that the Hessian pseudo-Riemannian metric (constant in this case) is determined by 
\bq
K(I,V)= \frac{1}{2}L I^2 - \frac{1}{2}C V^2
\eq
A port-Hamiltonian formulation is obtained by the transformation of \eqref{BM1} into the state variables $z=(LI,CV)=:(\varphi,Q)$ (flux linkages and charges), leading to the Hamiltonian $\frac{1}{2}\varphi^\top L^{-1} \varphi + \frac{1}{2}Q C^{-1}Q$ (magnetic plus electric energy).
}
\end{example}

\begin{example} [Swing equation model of a power network, \cite{vdsstegink,vds19}]\label{swing}
{\rm The swing equation model of a power network with incidence matrix $D$ is given by the port-Hamiltonian system
\bq
\label{grad}
\begin{array}{rcl}
\begin{bmatrix} \dot{p} \\[2mm] \dot{q} \end{bmatrix} & = &
\begin{bmatrix} -A & D\\[2mm] D^\top & 0 \end{bmatrix} 
\begin{bmatrix} \frac{\partial H}{\partial p}(p,q) \\[2mm]  \frac{\partial H}{\partial q}(p,q) \end{bmatrix} + 
\begin{bmatrix} I \\[2mm] 0 \end{bmatrix} u \\[8mm]
y & = & \frac{\partial H}{\partial p}(p,q) ,
\end{array}
\eq
where $p$ is the vector of momenta corresponding to the synchronous machines at the nodes of the power network, and $q$ is the vector the angle differences across each transmission line (edge of the network). The Hamiltonian is given as
\bq
H(p,q) =\frac{1}{2} p^\top M^{-1}p - \sum_{j=1}^k \gamma_j \cos q_j,
\eq
for a positive diagonal mass matrix $M$, and constants $\gamma_j$ determined by the physical properties of the $j$-th transmission line and the voltages at its adjacent nodes (which in the swing equation model are assumed to be constant). The output $y$ is given by $\frac{\partial H}{\partial p}(p,q)=M^{-1}p=:\omega$ representing the frequency deviations (with respect to a nominal value, e.g., $50$Hz) at each node. This defines a Hessian pseudo-gradient system.
In fact, the $x$-variables are given by
\bq
\begin{array}{rcll}
\omega & := & \frac{\partial H}{\partial p}(p)= M^{-1}p  \quad & \mbox{ (frequency deviations at the nodes) }\\[2mm]
\pi & := & \frac{\partial H}{\partial q}(q)= \Gamma \Sin q , \quad & \mbox{ (power flows through the lines) }
\end{array}
\eq
where $\Sin$ denotes the vector sinus function $\Sin q =(\sin q_1, \cdots, \sin q_k)$, with $k$ the number of edges, and $\Gamma$ the diagonal matrix with diagonal elements $\gamma_1, \cdots, \gamma_k$.
Then \eqref{grad} can be rewritten as the Hessian pseudo-gradient system
\bq
\begin{bmatrix} M & 0 \\[2mm] 0 & -L(\pi)
\end{bmatrix}
\begin{bmatrix} \dot{\omega} \\[2mm]  \dot{\pi}\end{bmatrix} = 
- \begin{bmatrix} \frac{\partial P}{\partial \omega}(\omega, \pi) \\[2mm]  \frac{\partial P}{\partial \pi}(\omega, \pi)\end{bmatrix} +
\begin{bmatrix} I \\[2mm]0 \end{bmatrix}u, \quad y= \omega,
\eq
where $L(\pi)$ is the positive diagonal matrix with $k$-th diagonal element
$\frac{1}{\sqrt{\gamma_k^2 - \pi_k^2}}$.
In fact, $H_1(p)=\frac{1}{2} p^T M^{-1}p$ and $H_2(q)=- \sum_{j=1}^k \gamma_j \cos q_j$, with Legendre transforms
\bq
H_1^*(\omega) = \frac{1}{2} \omega^\top M \omega, \quad H_2^*(\pi)= \sum_{j=1}^k \pi_j \arcsin \frac{\pi_j}{\gamma_j} + \gamma_j \cos (\arcsin \frac{\pi_j}{\gamma_j}),
\eq
yielding $K(\omega,\pi)= \frac{1}{2} \omega^\top M \omega - \sum_{j=1}^k \pi_j \arcsin \frac{\pi_j}{\gamma_j} + \gamma_j \cos (\arcsin \frac{\pi_j}{\gamma_j})$.
The mixed-potential function is
\bq
P(\omega,\pi)= \pi^\top D\omega + \frac{1}{2}\omega^\top A \omega
\eq
Following the theory of Section \ref{subsec:pashes} the system is passive with storage function
\bq
\frac{1}{2}\omega^\top M \omega - \sum_{j=1}^k \gamma_j \cos \frac{\pi_j}{\gamma_j}
\eq
(i.e., the original Hamiltonian $H$ expressed in the co-energy variables $\omega, \pi$).
}
\end{example}
\begin{example}
\label{nonRC}
{\rm Consider a nonlinear RC electrical circuit, with nonlinear \emph{conductors} at the \emph{edges} and grounded nonlinear \emph{capacitors} at part of the \emph{nodes} ($c$), while the remaining nodes are \emph{terminals} ($t$); see also \cite{cams22}.
Decompose the incidence matrix as $D = \bma D_c \\ D_t \ema$. The nonlinear conductors at the $k$ edges are given by characteristics $I_j = G_j(V_j)$, $j=1, \cdots, k$, where the functions $G_j$ are assumed to be \emph{monotone} and continuous, implying the existence of convex functions $\widehat{W}_j(V_j)$ with $G_j(V_j)= \frac{d\widehat{W}_j}{dV_j}(V_j), j=1, \cdots,m$. Furthermore assume $G_j(0)=0$. Define the convex function $\widehat{W}(V_1, \ldots, V_m) := \sum_{j=1}^k \widehat{W}_j(V_j)$. It follows that the vector of currents $I$ through the edges is given as $I= \frac{\partial \widehat{W}}{\partial V}(V)$, where $V$ is the vector of voltages across the edges.

By Kirchhoff's current law the vector $J$ of currents incoming at the nodes relates to the currents $I$ through the edges as $\bma J_c \\ J_t \ema = DI$. Furthermore, by Kirchhoff's voltage law $V=D^T \psi$, where $\psi = \bma \psi_c \\ \psi_t \ema$ is the vector of nodal voltage potentials. Define finally the convex function $W(\psi) := \widehat{W}(D^T \psi)$.
Then 
\bq
\frac{\partial W}{\partial \psi}(\psi) = D \frac{\partial \widehat{W}}{\partial V}(D^T \psi) = D I= \bma J_c \\ J_t \ema,
\eq
while $\frac{\partial W}{\partial \psi}(0)=0$. 
Furthermore, the grounded nonlinear capacitors with charges $Q$ satisfy $\dot{Q}=-J_c$ and $\psi_c = \nabla H(Q)$, with $H(Q)$ the electric energy stored at the capacitors with vector of charges $Q$. Hence the dynamics is given as
\bq
\begin{array}{rcl}
\dot{Q} & = & - \frac{\partial W}{\partial \psi_c}(\nabla H(Q), \psi_t) \\[4mm]
J_t & = & \frac{\partial W}{\partial \psi_t}(\nabla H(Q), \psi_t) 
\end{array}
\eq
with inputs $\psi_t$ (voltage potentials at terminals) and outputs $J_t$ (incoming currents at the terminals). Since $W$ is convex in $\psi$, this is a maximal cyclically monotone port-Hamiltonian system with respect to the signature matrix $\sigma= -I$; cf. Proposition \ref{prop:monotone}. 

Finally, assume that the electric energy $H(Q)$ satisfies $\nabla^2 H(Q)>0$. Then in the new state coordinates $\psi_c=\nabla H(Q)$, the equations take the equivalent form
\bq
\begin{array}{rcl}
\nabla^2 H^*(\psi_c) \dot{\psi}_c & = & - \frac{\partial W}{\partial \psi_c}(\psi_c, \psi_t) \\[4mm]
J_t & = & \frac{\partial W}{\partial \psi_t}(\psi_c, \psi_t) ,
\end{array}
\eq
which can be recognized as a \emph{relaxation system} (however now with $\sigma=-I$). Following the set-up in Section \ref{subsec:relaxation} the storage function is given as $S(\psi_c):= H(\nabla H^*(\psi_c))$. (Note that the function $K$ determining the Hessian Riemannian metric is given as $K(\psi_c) = H^*(\psi_c)$.) Indeed, since $\frac{\partial W}{\partial \psi}(0)=0$ and $W$ is convex
\bq
\psi_c^\top \frac{\partial W}{\partial \psi_c}(\psi)   + \psi_t^\top \frac{\partial W}{\partial \psi_t}(\psi) \geq 0
\eq
Hence
\bq
\frac{d}{dt} S(\psi_c) = \nabla H (\nabla H^*(\psi_c)) \nabla^2 H^*(\psi_c) = - \psi_c^\top \frac{\partial W}{\partial \psi_c}(\psi) \leq J_t^\top \psi_t
\eq
}
\end{example}
%

\section{Conclusions}
\label{sec:conclusions}
A number of steps have been taken to generalize the results of \cite{willems72b} to the nonlinear case. First of all, a clear geometric definition of reciprocity of nonlinear input-state-output systems has been given extending the geometric formulation of reciprocity in the linear case. Furthermore the notion of a pseudo-gradient system has been specialized to Hessian pseudo-Riemannian systems, which is a physically well-motivated class of pseudo-gradient systems that is more amenable for analysis than general pseudo-gradient systems. In particular, the combination with passivity is more easy. The ensuing definition of a relaxation system with an explicit storage function should be a good starting point for analyzing their input-output properties. Furthermore, the results obtained in this paper demonstrate the importance of Legendre transformations and convex analysis. Indeed, although the examples given in this paper are all of a physical nature, this can be extended to examples originating from convex optimization algorithms (in continuous time); see already \cite{cams22}.

In general, the problem of characterization of input-output properties of (Hessian) pseudo-gradient systems is still open (see \cite{chaffey,willems72b,willems76} for the linear case), although the results of \cite{cortes} should be helpful. Furthermore, only partial results concerning the combination of reciprocity and passivity have been obtained.

\section{Appendix A: Legendre transformation and its properties}

Consider a differentiable function $K: \X \to \mR$ on an $n$-dimensional connected convex domain $\X$, with invertible Hessian matrix, i.e., $ \det \nabla^2 K(x) \neq 0$ for all $x \in \X$. This implies that the map $x \mapsto z:= \nabla K(x)$ is injective. Denote the co-domain of this map by $\Z$. Then the Legendre transform of $K$ is the function $K^*: \Z \to \mR$ defined as
\bq
K^*(z):=  z^\top x - K(x), \quad z =\nabla K(x),
\eq
where $x$ is solved from $z = \nabla K(x)$. 

In case $\nabla^2 K(x) \geq 0$ for all $x \in \X$, then $K$ is \emph{convex} (and \emph{strictly convex} if $\nabla^2 K(x) > 0$), and the Legendre transform of $K$ is given as
\bq
K^*(z)=  \sup_x z^\top x - K(x)
\eq
Furthermore, $\Z$ is convex and $K^*: \Z \to \mR$ is a convex function. $K^*$ in this case is also called the \emph{convex conjugate} of $K$.

The following properties of the Legendre transform are well-known; see for example \cite{rockafellar}
\bq
\begin{array}{l}
\left( K^*\right)^* = K \\[2mm]
\nabla K(\nabla K^*(z)) = z, \quad \nabla K^*(\nabla K(x)) = x \\[2mm]
\nabla^2 K^*(z) = \left(\nabla^2 K(\nabla K^*(z))\right)^{-1}, \quad \nabla^2 K(x) = \left(\nabla^2 K^*(\nabla K(x))\right)^{-1}
\end{array}
\eq
Two other useful properties, perhaps less well-known, are the following.
\begin{proposition}\label{A:prop}
Let $K: \X \to \mR$ and $K^*: \Z \to \mR$ its Legendre transform. Consider the function $L(x):= K^*(\nabla K(x))$. 
Then 
\bq
\nabla L(x) = \nabla K^*(\nabla K(x)) \nabla^2 K(x) = x^\top \nabla^2 K(x)
\eq
(In particular, $\nabla L(0)=0$.)

Furthermore, $K^*(\nabla K(x))=K(x)$ if and only if $K(x) -K(0)$ is homogeneous of degree $2$.
\end{proposition}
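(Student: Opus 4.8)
The plan is to prove the two assertions separately: the gradient formula falls out of the chain rule together with the Legendre identities already recorded, and the homogeneity equivalence reduces to Euler's identity for homogeneous functions. For the gradient formula I would differentiate $L(x) = K^*(\nabla K(x))$ directly by the chain rule, obtaining $\nabla L(x) = \nabla K^*(\nabla K(x))\,\nabla^2 K(x)$, where the second factor is the Jacobian of the map $x \mapsto \nabla K(x)$. The single substantive input is the Legendre identity $\nabla K^*(\nabla K(x)) = x$ listed above; it converts the first factor into $x^\top$ and yields $\nabla L(x) = x^\top \nabla^2 K(x)$. Evaluating at $x=0$ gives $\nabla L(0) = 0$ at once.

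For the equivalence I would start from the defining relation of the Legendre transform, namely $K^*(z) = z^\top x - K(x)$ with $z = \nabla K(x)$, so that the identity $L(x) = \langle \nabla K(x), x\rangle - K(x)$ holds for every $x$. Consequently $L(x) = K(x)$ for all $x$ is equivalent to $\langle \nabla K(x), x\rangle = 2K(x)$, which is precisely Euler's identity characterizing degree-$2$ homogeneity. I would then invoke Euler's theorem in both directions: if $\langle \nabla K(x),x\rangle = 2K(x)$ holds everywhere then $K$ is homogeneous of degree $2$, and conversely homogeneity of $K - K(0)$ of degree $2$ gives $\langle \nabla K(x), x\rangle = 2\bigl(K(x) - K(0)\bigr)$. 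Euler's theorem itself for smooth functions is standard and can be obtained by differentiating $t \mapsto K(tx)$, so I would simply cite it.

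The step I expect to be the main obstacle is the bookkeeping of the additive constant, which is the only place where the verbatim statement and the homogeneity condition do not quite align. Euler's identity in the exact form $\langle \nabla K, x\rangle = 2K$ forces $K(0)=0$, whereas the proposition phrases homogeneity for $K - K(0)$, permitting an arbitrary constant. Reconciling these amounts to observing that under homogeneity of $K - K(0)$ one computes $L(x) = \langle \nabla K(x), x\rangle - K(x) = K(x) - 2K(0)$, so the asserted equality $L = K$ should be read up to the additive constant $2K(0)$ and becomes exact precisely under the normalization $K(0)=0$ (which the forward direction produces automatically). I would make this constant-tracking explicit, since it is the subtle point; everything else is a routine application of the chain rule and Euler's theorem.
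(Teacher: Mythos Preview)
Your approach is correct. The gradient formula is handled identically to the paper (chain rule plus $\nabla K^*(\nabla K(x)) = x$), and your treatment of the additive constant is accurate and in fact more careful than the paper's own statement.

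For the homogeneity equivalence your route is genuinely shorter than the paper's. Both you and the paper handle the ``if'' direction the same way, via $L(x) = \langle \nabla K(x), x\rangle - K(x)$ and Euler. For the converse, however, the paper \emph{differentiates} the assumed identity $L = K$ to obtain $x^\top \nabla^2 K(x) = \nabla K(x)$, then applies Euler's theorem to conclude that each $\partial K/\partial x_i$ is homogeneous of degree $1$, and finally integrates along the ray $t \mapsto tx$ to recover that $K - K(0)$ is homogeneous of degree $2$. You bypass this differentiate-then-reintegrate detour: since $L(x) = \langle \nabla K(x), x\rangle - K(x)$ already holds identically, $L = K$ is immediately equivalent to Euler's identity $\langle \nabla K(x), x\rangle = 2K(x)$, and Euler's theorem finishes in one step. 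Your argument is more economical; the paper's has the minor advantage of reusing the gradient formula from the first part, which makes the two halves of the proposition interact, but at the cost of an unnecessary round trip through the Hessian.
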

\begin{proof}
The first statement follows from a direct computation and the fact that $\nabla K^*(\nabla K(x))= x$.
%

For the second statement, let $K(0)=0$, and assume $K$ is homogeneous of degree $2$. Then consider $K^*(\nabla K(x))= \nabla K(x) x - K(x)$. Since $K$ is homogeneous of degree $2$, by Euler's homogenous function theorem $\nabla K(x) x = 2K(x)$. Substitution yields $K^*(\nabla K(x))=K(x)$.

Conversely, if $K^*(\nabla K(x))=K(x)$ then $\nabla K(x) = x^\top \nabla^2 K(x)$. This means that
\bq
\label{Euler2}
\sum_{j=1}^n x_j \frac{\partial^2 K}{\partial x_j \partial x_i}(x) = \frac{\partial K}{\partial x_i}(x)
\eq
Define the functions $k_i(x)= \frac{\partial K}{\partial x_i}(x), i=1, \cdots,n$. Then \eqref{Euler2} is the same as $\sum_{j=1}^n x_j \frac{\partial k_i}{\partial x_j}(x) = k_i(x)$, $ i=1, \cdots,n$. Thus by Euler's homogeneous function theorem the functions $k_i$ are all homogeneous of degree $1$. 
Finally define the function $h(t,x):=K(tx)$. Obviously, $h(1,x)=K(x), h(0,x)=K(0)$, and furthermore $\frac{\partial h}{\partial t}(t,x)= \sum_{i=1}^n k_i(tx)x_i$. Hence
\bq
K(x) - K(0)= \int_0^1 \frac{\partial h}{\partial t}(t,x) dt = \sum_{i=1}^n \int_0^1 k_i(tx)x_i dt = \sum_{i=1}^n  k_i(x)x_i \int_0^1 tdt
\eq
where the last equality follows from the fact that $k_i$ are all homogeneous of degree $1$, and thus $k_i(tx)=tk_i(x)$. Therefore, $K(x) - K(0)= \frac{1}{2}\sum_{i=1}^n  \frac{\partial K}{\partial x_i}(x)x_i$, showing, again by Euler's theorem, that $K(x) -K(0)$ is homogeneous of degree $2$.
\end{proof}
\begin{remark}
Obviously, if $K$ is the quadratic function $K(x)=\frac{1}{2}x^\top G x$, then it is homogeneous of degree $2$ and therefore $K^*(\nabla K(x))=K(x)$. Indeed, its Legendre transform is $K^*(z)=\frac{1}{2}z^\top G^{-1} z$, and thus, substituting $z=\nabla K(x)=Gx$, $K^*(\nabla K(x))= \frac{1}{2}(Gx)^\top G^{-1} Gx = K(x)$. 
\end{remark}

\section{Appendix B: Connections of (Hessian) pseudo-Riemannian metrics}
A connection $D$ on an $n$-dimensional manifold $\X$ is an assignment $(X,Y)  \mapsto  D_XY$ from any two vector fields $X,Y$ on $\X$ to a new vector field $D_XY$ on $\X$ (the covariant derivative of $Y$ along $X$), which is $\mR$-bilinear, and satisfies $D_{fX} Y = f D_XY$ and $D_{X} fY = f D_XY + X(f)Y$ for any vector fields $X,Y$ on $\X$ and functions $f:\X \to \mR$. In local coordinates $x_1,\cdots,x_n$, a connection $D$ is determined by its \emph{Christoffel symbols} $\Gamma^k_{ij}(x), i,j,k=1, \cdots,n,$ as
\bq
D_{\frac{\partial}{\partial x_i}} {\frac{\partial}{\partial x_j}} = \sum_{k=1}^n\Gamma^k_{ij}(x) {\frac{\partial}{\partial x_k}}
\eq
The connection is called \emph{torsion-free} if the Christoffel symbols $\Gamma^k_{ij}(x)$ are symmetric in $i,j$.

Any pseudo-Riemannian metric $G$ on $\X$ determines a unique torsion-free \emph{connection} (called the Levi-Civita connection in case $G$ is a true Riemannian metric). The Christoffel symbols of the Levi-Civita connection of a pseudo-Riemannian metric $G$ are defined as $\Gamma^k_{ij}(x) := \sum_{\ell} G^{k\ell}(x) \Gamma_{\ell ij}(x)$, where $G^{k\ell}(x)$ denotes the $(k,\ell)$-th element of the inverse matrix $G(x)^{-1}$ and
\bq
\Gamma_{\ell ij}= \frac{1}{2}\left( \frac{\partial G_{\ell i}(x)}{\partial x_j} + \frac{\partial G_{\ell j}(x)}{\partial x_i} - \frac{\partial G_{ij}(x)}{\partial x_\ell} \right)
\eq 
In case of a \emph{Hessian} pseudo-Riemannian metric $G(x)=\nabla^2 K(x)$ the Christoffel symbols of the Levi-Civita connection simplify to
\bq
\Gamma_{kij}(x)= \frac{1}{2} \frac{\partial^3 K(x)}{\partial x_k \partial x_i \partial x_j}, \quad \Gamma^k_{ij}(x)= \frac{1}{2} \sum_{\ell=1}^n \frac{\partial^2 K^*(\nabla K(x))}{\partial z_k \partial z_\ell} \frac{\partial^3 K(x)}{\partial x_\ell \partial x_i \partial x_j},
\eq
where we have used $\left( \nabla^2 K(x)\right)^{-1} = \nabla^2 K^*(\nabla K(x))$, with $K^*: \Z \to \mR$ the Legendre transform of $K$; cf. Appendix A. Furthermore, by invertibility of $\nabla^2 K^*(z)$ it follows that the Christoffel symbols $\Gamma^k_{ij}(x)$ of the Levi-Civita connection of $\nabla^2 K(x)$ are zero if and only if $\frac{\partial^3 K(x)}{\partial x_k \partial x_i \partial x_j}=0$; that is, if and only if $K$ is a quadratic-affine function (and thus $\nabla^2 K(x)$ is constant).

\end{document}